\numberwithin{equation}{section}
\newtheorem{thm}{Theorem}[section]
\newtheorem{prop}[thm]{Proposition}
\newtheorem{lem}[thm]{Lemma}
\theoremstyle{remark}
\newtheorem{rmk}[thm]{Remark}
\theoremstyle{definition}
\newtheorem{defn}{Definition}[section]
\DeclareMathOperator{\Cov}{Cov}
\DeclareMathOperator{\E}{\mathbb{E}}
\DeclareMathOperator{\C}{\mathbb{C}}
\DeclareMathOperator{\N}{\mathbb{N}}
\DeclareMathOperator{\R}{\mathbb{R}}
\DeclareMathOperator{\bH}{\mathbb{H}}
\DeclareMathOperator{\cF}{\mathcal{F}}
\DeclareMathOperator{\cN}{\mathcal{N}}
\DeclareMathOperator{\cL}{\mathcal{L}}
\DeclareMathOperator{\cH}{\mathcal{H}}
\DeclareMathOperator{\cS}{\mathcal{S}}
\DeclareMathOperator{\cD}{\mathcal{D}}
\DeclareMathOperator{\bP}{\mathbb{P}}
\newcommand{\pd}[2]{\frac{\partial #1}{\partial #2}}
\newcommand{\pdsup}[3]{\frac{\partial^{#3} #1}{\partial #2^{#3}}}
\newcommand{\dersup}[3]{\frac{d^{#3} #1}{d #2^{#3}}}
\newcommand{\Norm}[2]{\left\Vert #1 \right\Vert_{#2}}
\author{Giacomo Ascione$^\ast$}
\address{$^\ast$ Dipartimento di Matematica e Applicazioni ``Renato Caccioppoli'', Universita degli Studi di Napoli Federico II, 80126 Napoli, Italy}
\author{Yuliya Mishura$^\odot $}
\address{$^\odot $ Department of Probability Theory, Statistics and Actuarial Mathematics, Taras Shevchenko National University of Kyiv, Volodymyrska 64, Kyiv 01601, Ukraine}
\author{Enrica Pirozzi$^\ast$}
\email{giacomo.ascione@unina.it \\
	myus@univ.kiev.ua \\
	enrica.pirozzi@unina.it}
\title[Convergence results for the TCfOU]{Convergence results for the Time-Changed fractional Ornstein-Uhlenbeck processes}
\begin{document}
\maketitle
\begin{abstract}
In this paper we study some convergence results concerning the one-dimensional distribution of a time-changed fractional Ornstein-Uhlenbeck process. In particular, we establish  that, despite the time change, the process admits a Gaussian limit random variable. On the other hand, we prove that the process converges towards the time-changed Ornstein-Uhlenbeck as the Hurst index $H \to 1/2^+$, with locally uniform convergence of one-dimensional distributions. Moreover, we also achieve convergence in the Skorohod $J_1$-topology  of the time-changed fractional Ornstein-Uhlenbeck process as $H \to 1/2^+$ in the space of c\'{a}dl\'{a}g functions. Finally, we exploit some convergence properties of mild solutions of a generalized Fokker-Planck equation associated to the aforementioned processes, as $H \to 1/2^+$ .
\end{abstract}
\keywords{Keywords: fractional Brownian motion, subordinator, weak convergence in Skorohod space.}
\section{Introduction}
Among stochastic processes, the Ornstein-Uhlenbeck (OU) process is one of the most commonly used in various applications. In particular, it is revealed to be a quite tractable process, being the solution of a simple linear stochastic differential equation 
\begin{equation}\label{SDEOU}
dU(t)=-\frac{1}{\theta}U(t)dt+\sigma dB(t),
\end{equation}
where $\theta,\sigma \in \R^+$ and $=\{B(t),t\ge 0\}$ is a standard Brownian motion. Such process exhibits Markov property and its covariance admits an exponential decay. For this reason, it cannot be used in models in which the memory, in particular, a long memory,  plays a prominent role. An example of long memory  behaviour is given by models of neurons of the prefrontal cortex, in which the spiking behaviour cannot be described by the OU process (see \cite{shinomoto1999ornstein}), that  was instead the main process   arising in stochastic Leaky Integrate-and-Fire models, which were the most used. In such context, a modification of the behaviour of the covariance of the process revealed to be effective to describe the spiking dynamics (see \cite{sakai1999temporally}). For this reason, the OU process had to be generalized in order to achieve some more long-range memory in the covariance.\\

One of the generalizations is given in terms of fractional Pearson diffusions (as considered in \cite{leonenko2013fractional}). However, due to ambiguous notations, we will refer to such process as $\alpha$-stable time-changed OU process. These time-changed processes were not Markov process, however they still preserve Markov property in some specific times, in particular, they exhibit a semi-Markov property. Another important thing we want to underline concerning $\alpha$-stable time-changed OU processes is that they always exhibit  long-range dependence, as shown in \cite{leonenko2013correlation}. Another generalization can be achieved by defining a time-changed OU process by means of a general inverse subordinator, as given in \cite{gajda2015time}. Let us stress out that the behaviour of such time-changed process is adapted   to the behaviour of some particular neurons which could not be modelled via OU processes (see \cite{ascione2019semi}).\\

Another interesting generalization can be obtained by changing the nature of the noise in equation \eqref{SDEOU}. In \cite{cheridito2003fractional}, the fractional OU (fOU) process has been introduced as the solution of the fractional Brownian motion (fBm)-driven equation
\begin{equation*}
dU_H(t)=-\frac{1}{\theta}U_H(t)dt+\sigma dB_H(t),
\end{equation*}
where $\theta,\sigma \in \R^+$ and $B_H\{B_H(t),t\ge 0\}$ is a fBm with Hurst parameter $H \in (0,1)$. This process exhibits long- or short-range dependence as a function of the Hurst parameter (see \cite{cheridito2003fractional,kaarakka2011fractional}). Considering  $H>1/2$ throughout the paper, we will always have long-range dependence. The approach by fOU processes is widely used in financial markets (see for instance \cite{anh2005financial}). In this direction,  the study of the fractional Cox-Ingersoll-Ross process has to be carried on, in particular referring to the hitting time at $0$ of the fOU process (see \cite{mishura2018stochastic,mishura2018fractional}). \\

One can actually combine the two strategies, obtaining a time-changed fractional Ornstein-Uhlenbeck process, as done in \cite{ascione2019time}. Such time-changed fOU process generally admits a probability density function, depending on both the Hurst index of the parent fOU process and the Bernstein function representing the Laplace exponent of the involved subordinator. Thus, as it is known that the fOU process converges towards the OU process as $H \to 1/2$ (actually, the covariance of the fOU is a continuous function with respect to the Hurst index, as shown in \cite{ascione2019fractional}), a natural question that arises is linked to the behaviour of the density of the time-changed fOU process with respect to the Hurst index.\\

In this paper we focus on some convergence results concerning the time-changed fOU process. On the one hand, since the fOU process is a Gaussian process, we   prove that the time-changed fOU process admits a limit distribution and that such distribution is Gaussian. This is justified by the fact that the time-change acts as a change of time scale and then does not modify the asymptotic behaviour. On the other hand, we   establish the convergence of the time-changed fOU introduced in \cite{ascione2019time} to the time-changed OU process introduced in \cite{gajda2015time}, as $H \to 1/2$. As one can  see, we have not only weak one-dimensional convergence of the process, but in fact the uniform convergence on compact sets of the marginal of the process, as $H \to 1/2$. Moreover, by using a continuous mapping approach, we are also able to show a functional limit theorem, obtaining the aforementioned convergence in the space of c\'{a}dl\'{a}g functions with respect to the Skorohod topology $J_1$. Finally, we study some convergence properties of the solutions of the generalized Fokker-Planck equation introduced in \cite{ascione2019time} and studied more in details in \cite{ascione2020alpha}.

The paper is structured as follows: in Section \ref{Sec1} we recall some properties of inverse subordinators and Bernstein functions. In Section \ref{Sec2} we recall the definition and some properties of the time-changed fOU process as given in \cite{ascione2019time} and    state some preliminary results concerning the convergence of the marginals of the fOU process.
	 In Section \ref{Sec3} we exploit the fact that the time-changed fOU process still admits a Gaussian limit distribution, despite not being a Gaussian process.
  In Section \ref{Sec4} we establish  the one-dimensional convergence of the time-changed fOU process towards the time-changed OU process as $H \to 1/2$. Moreover, we stress that the convergence of the density is actually uniform on compact sets and the convergence of the absolute moments is uniform.
  In Section \ref{Sec5} we provide a functional limit theorem for the convergence of the time-changed fOU process towards the time-changed OU process. To do this, we first prove a functional limit theorem for the convergence of the fOU process towards the OU process in the space of continuous functions (actually, in the space of H\"older-continuous functions), and then  give our main functional limit theorem, where we had to weaken the request on the topology to use the continuous mapping approach.
	 Finally, in Section \ref{Sec6} we show that limits of mild solutions of the generalized Fokker-Planck equation associated to the time-changed fOU as $H \to 1/2^+$ are actually solutions of the generalized Fokker-Planck equation associated to the time-changed OU process.
 
\section{Inverse subordinators and Bernstein functions}\label{Sec1}
In what  follows we will consider subordinators, i.e.increasing and starting from zero (hence positive) L\'evy processes (see \cite[Chapter $3$]{bertoin1996levy}). Each subordinator admits a Laplace exponent $\Phi(\lambda)$, i.e. a function $\Phi:[0,+\infty) \to \R$ such that
\begin{equation*}
\E[e^{-\lambda \sigma(t)}]=e^{-t\Phi(\lambda)}, \ t\ge 0, \ \lambda \ge 0.
\end{equation*}
In particular, such Laplace exponents $\Phi$ belong to the convex cone of Bernstein functions (see \cite{schilling2012bernstein}), and therefore we can define the characteristic triple of $\Phi$, given by $(a_\Phi,b_\Phi,\nu_\Phi)$, where $a_\Phi,b_\Phi\ge 0$ are constants and $\nu_\Phi$ is a L\'evy measure on $(0,+\infty)$ such that $\int_0^{+\infty}(t \wedge 1)\nu_\Phi(dt)<+\infty$. Indeed, let us recall that any Bernstein function $\Phi$ can be represented in a unique way by means of the characteristic triple as
\begin{equation*}
\Phi(\lambda)=a_\Phi+b_\Phi\lambda+\int_0^{+\infty}(1-e^{-t\lambda})\nu_\Phi(dt).
\end{equation*}
Here we will consider $\Phi$ to be such that $a_\Phi=0$ and if $b_\Phi=0$ then we assume that   $\nu_\Phi(0,+\infty)=+\infty$. Each Bernstein function $\Phi$ determines a unique (non-killed, since $a=0$) subordinator $\sigma_\Phi(t)$ whose Laplace exponent is $\Phi$. Now let us define the inverse subordinator associated to $\Phi$ as
\begin{equation*}
E_\Phi(t):=\inf\{y>0: \ \sigma_\Phi(y)>t\}.
\end{equation*}
It was established  in \cite{meerschaert2008triangular} that  our hypotheses on $\Phi$ are enough to guarantee that $E_\Phi(t)$ admits a probability density function $f_\Phi(s;t)$ for each $t>0$. Moreover, it has been shown that
\begin{equation*}
\cL_{t \to \lambda}[f_\Phi(s;t)]=\frac{\Phi(\lambda)}{\lambda}e^{-s\Phi(\lambda)},
\end{equation*}
where $\cL_{t \to \lambda}$ denotes the Laplace transform operator.\\
A particular case is given by the choice $\Phi(\lambda)=\lambda^\alpha$ for $\alpha \in (0,1)$. Indeed, in this case, we have the $\alpha$-stable subordinator $\sigma_\alpha(t)$. According  to  \cite{meerschaert2013inverse}, $\sigma_\alpha(t)$ and $E_\alpha(t)$ are absolutely continuous random variables for any $t>0$ and, if we denote by $g_\alpha(x)$ the probability density function of $\sigma_\alpha(1)$, it was proved in \cite{meerschaert2013inverse} that
\begin{equation}\label{eqftog}
f_\alpha(x,t)=\frac{t}{\alpha}x^{-1-\frac{1}{\alpha}}g_\alpha(tx^{-\frac{1}{\alpha}}).
\end{equation}
The following lemma presents the result whose  proof is contained in  Remarks $4.2$ and $4.4$ of \cite{ascione2019time}, therefore now it is omitted.
\begin{lem}\label{lem:mominv}
	It holds that   $\E[E_\alpha^{-\gamma}(t)]<+\infty$ for any $t>0$ and any $\gamma \in \left(0,1\right)$.   In contrast, for any $n>1$ and $H \in \left(\frac{1}{2},1\right)$ the higher-order moments are infinite:  $\E[E_\alpha^{-nH}(t)]=+\infty$.
\end{lem}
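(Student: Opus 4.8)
The plan is to reduce both assertions to a single sharp integrability criterion for the one-dimensional distribution of the $\alpha$-stable subordinator. First I would exploit the density identity \eqref{eqftog} to convert a negative moment of $E_\alpha(t)$ into a positive moment of $\sigma_\alpha(1)$. For any exponent $\gamma>0$ one writes
\begin{equation*}
\E[E_\alpha^{-\gamma}(t)]=\int_0^{+\infty}x^{-\gamma}f_\alpha(x,t)\,dx=\frac{t}{\alpha}\int_0^{+\infty}x^{-\gamma-1-\frac{1}{\alpha}}g_\alpha\!\left(tx^{-\frac{1}{\alpha}}\right)dx,
\end{equation*}
and the substitution $u=tx^{-1/\alpha}$ (so that $x=t^\alpha u^{-\alpha}$) collapses every power of $t$ and $x$ into
\begin{equation*}
\E[E_\alpha^{-\gamma}(t)]=t^{-\alpha\gamma}\int_0^{+\infty}u^{\alpha\gamma}g_\alpha(u)\,du=t^{-\alpha\gamma}\,\E[\sigma_\alpha^{\alpha\gamma}(1)].
\end{equation*}
This is an identity in $[0,+\infty]$, valid irrespective of finiteness, so $\E[E_\alpha^{-\gamma}(t)]<+\infty$ holds if and only if the $(\alpha\gamma)$-th moment of $\sigma_\alpha(1)$ is finite, while the factor $t^{-\alpha\gamma}$ (finite and positive for every $t>0$) shows the $t$-dependence is harmless.

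Second, I would record the moment criterion for $\sigma_\alpha(1)$: for $p>0$ one has $\E[\sigma_\alpha^{p}(1)]<+\infty$ if and only if $p<\alpha$. This rests entirely on the heavy-tail asymptotics $g_\alpha(x)\sim c_\alpha x^{-1-\alpha}$ as $x\to+\infty$ (a classical property of stable subordinators), together with the fact that near the origin $g_\alpha$ decays faster than any power, so that the origin never contributes to the moment. Hence $\int_0^{+\infty}u^{p}g_\alpha(u)\,du$ behaves at infinity like $\int^{+\infty}u^{p-1-\alpha}\,du$, which converges precisely when $p<\alpha$ and diverges when $p\ge\alpha$. Equivalently, for $0<p<\alpha$ the fractional-moment formula applied to the Laplace transform $\E[e^{-\lambda\sigma_\alpha(1)}]=e^{-\lambda^\alpha}$ yields the closed form $\E[\sigma_\alpha^{p}(1)]=\Gamma(1-p/\alpha)/\Gamma(1-p)$, which blows up as $p\uparrow\alpha$.

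Finally I would combine the two steps. For the first claim, $\gamma\in(0,1)$ gives exponent $p=\alpha\gamma<\alpha$, so the moment is finite for every $t>0$. For the second claim, reading $n$ as an integer with $n>1$ (hence $n\ge 2$) and using $H>1/2$ yields $nH>1$, so the exponent is $p=\alpha nH>\alpha$, strictly beyond the integrability threshold; therefore $\E[E_\alpha^{-nH}(t)]=+\infty$. The whole argument hinges on the chain $\alpha\gamma<\alpha<\alpha nH$ straddling the critical order $\alpha$. The main obstacle is the second step: pinning down the \emph{exact} threshold at order $\alpha$, and in particular the divergence for $p\ge\alpha$, requires the precise tail rate of $g_\alpha$ rather than a crude bound. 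Everything else—the change of variables and the final arithmetic comparing $\alpha\gamma$, $\alpha$ and $\alpha nH$—is routine.
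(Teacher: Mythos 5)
Your argument is correct, and it is necessarily a different route, because the paper itself contains no proof of this lemma: it simply cites Remarks $4.2$ and $4.4$ of \cite{ascione2019time}. Your reduction is sound: the change of variables $u=tx^{-1/\alpha}$ applied to \eqref{eqftog} does yield the identity $\E[E_\alpha^{-\gamma}(t)]=t^{-\alpha\gamma}\E[\sigma_\alpha^{\alpha\gamma}(1)]$ in $[0,+\infty]$ (all integrands are nonnegative, so Tonelli applies), and this is in fact the analytic form of the scaling relation $E_\alpha(t)\overset{d}{=}\left(t/\sigma_\alpha(1)\right)^\alpha$ that underlies the cited remarks; combined with the classical tail asymptotics $g_\alpha(x)\sim c_\alpha x^{-1-\alpha}$, it gives the sharp threshold $\E[\sigma_\alpha^{p}(1)]<+\infty$ iff $p<\alpha$, from which both assertions follow at once. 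Two points are worth noting. First, you correctly flagged the only delicate reading of the statement: for real $n>1$ the claim $\E[E_\alpha^{-nH}(t)]=+\infty$ would be false (e.g.\ $n=1.2$, $H=0.6$ gives $nH<1$, hence a finite moment by your own identity), so $n$ must be an integer, $n\ge 2$, making $nH>1$ and $\alpha nH>\alpha$; this matches the intended use in \cite{ascione2019time}, where $n$ indexes moments. Second, compare your method with Lemma \ref{lem:mom1} in this paper, which proves the finiteness half for a \emph{general} Bernstein function $\Phi$ by bounding the density $f_\Phi(\cdot;t)$ near zero and integrating $s^{-\gamma}$ on $(0,1)$: that argument is more general in $\Phi$ but cannot detect the divergence at $\gamma\ge 1$, whereas your stable-scaling reduction is specific to $\Phi(\lambda)=\lambda^\alpha$ but buys the exact critical order, which is precisely what the second assertion of the lemma requires. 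A minor caveat: for the divergence you only need a lower tail bound $g_\alpha(x)\ge c x^{-1-\alpha}$ for large $x$, and you should cite a standard reference for the stable tail asymptotics (or the closed form $\E[\sigma_\alpha^{p}(1)]=\Gamma(1-p/\alpha)/\Gamma(1-p)$, which you state correctly), since that is the one nontrivial external input of your proof.
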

It is true in general that $\E[E_\Phi^{-\gamma}(t)]<+\infty$ for any $\gamma \in \left(0,1\right)$ . Indeed, we have the following result.
\begin{lem}\label{lem:mom1}
	Let $\gamma \in (0,1),\;t>0$. Then $\E[E_\Phi^{-\gamma}(t)]<+\infty$.
\end{lem}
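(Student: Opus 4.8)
The plan is to express the negative moment as a tail integral and then to control the small-argument behaviour of the law of $E_\Phi(t)$. Since $E_\Phi(t)>0$ almost surely for $t>0$ (because $\sigma_\Phi(0^+)=0\le t$), the random variable $E_\Phi^{-\gamma}(t)$ is well defined, and the layer-cake formula together with the substitution $u=s^{-\gamma}$ gives
\begin{equation*}
\E[E_\Phi^{-\gamma}(t)]=\int_0^{+\infty}\bP(E_\Phi^{-\gamma}(t)>u)\,du=\gamma\int_0^{+\infty}\bP(E_\Phi(t)\le s)\,s^{-\gamma-1}\,ds.
\end{equation*}
The integrand is integrable at $+\infty$ for free, since $\bP(E_\Phi(t)\le s)\le 1$ and $\gamma+1>1$; the whole difficulty lies in the possible blow-up of $s^{-\gamma-1}$ as $s\to 0^+$, which must be compensated by the decay of the distribution function of $E_\Phi(t)$ near the origin.

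To quantify that decay I would pass to the subordinator through the standard duality $\{E_\Phi(t)>s\}=\{\sigma_\Phi(s)\le t\}$, which yields $\bP(E_\Phi(t)\le s)=\bP(\sigma_\Phi(s)>t)$. The Laplace exponent can then be converted into a tail estimate by a Markov-type inequality applied to the increasing, bounded functional $1-e^{-\lambda\sigma_\Phi(s)}$: for every $\lambda>0$,
\begin{equation*}
\bP(\sigma_\Phi(s)>t)\le\frac{\E[1-e^{-\lambda\sigma_\Phi(s)}]}{1-e^{-\lambda t}}=\frac{1-e^{-s\Phi(\lambda)}}{1-e^{-\lambda t}}\le\frac{\Phi(\lambda)}{1-e^{-\lambda t}}\,s,
\end{equation*}
where the last step uses $1-e^{-x}\le x$. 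Fixing any $\lambda>0$ (say $\lambda=1$) gives $\bP(E_\Phi(t)\le s)\le C(t)\,s$ with $C(t)=\Phi(1)/(1-e^{-t})$ finite for each $t>0$.

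Inserting this into the tail integral and splitting at $s=1$ closes the argument: on $(0,1)$ one bounds $\bP(E_\Phi(t)\le s)\,s^{-\gamma-1}\le C(t)\,s^{-\gamma}$, which is integrable precisely because $\gamma<1$, while on $(1,+\infty)$ one uses $\bP(E_\Phi(t)\le s)\le 1$ and the integrability of $s^{-\gamma-1}$. Hence $\E[E_\Phi^{-\gamma}(t)]<+\infty$ for every $t>0$.

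The main obstacle is exactly the control near $s=0$; everything hinges on the linear-in-$s$ bound for $\bP(\sigma_\Phi(s)>t)$, and it is here that the hypothesis $\gamma<1$ becomes indispensable, since for $\gamma\ge 1$ the integral $\int_0 s^{-\gamma}\,ds$ diverges, consistently with the negative part of Lemma \ref{lem:mominv}. As an alternative route one can bypass the distributional estimate and compute the Laplace transform in $t$ directly: by Tonelli and the stated formula for $\cL_{t\to\lambda}[f_\Phi(s;t)]$ one obtains $\cL_{t\to\lambda}[\E[E_\Phi^{-\gamma}(t)]]=\Gamma(1-\gamma)\Phi(\lambda)^\gamma/\lambda$, finite for all $\lambda>0$; this yields finiteness for almost every $t$, which can be upgraded to every $t>0$ using that $t\mapsto\E[E_\Phi^{-\gamma}(t)]$ is non-increasing.
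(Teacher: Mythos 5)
Your proof is correct, but it takes a genuinely different route from the paper's. The paper works directly with the density $f_\Phi(s;t)$ of $E_\Phi(t)$: it invokes the known facts that $s\mapsto f_\Phi(s;t)$ is continuous with $\lim_{s\to 0^+}f_\Phi(s;t)=\bar{\nu}_\Phi(t)<+\infty$, hence bounded on $(0,1)$, and then splits $\int_0^{+\infty}s^{-\gamma}f_\Phi(s;t)\,ds$ at $s=1$, bounding the first piece by $\Norm{f_\Phi(\cdot,t)}{L^\infty(0,1)}\int_0^1 s^{-\gamma}\,ds$ and the second by $1$. You avoid the density altogether: the layer-cake representation together with the inversion identity $\{E_\Phi(t)>s\}=\{\sigma_\Phi(s)\le t\}$ (exact only up to the null boundary event $E_\Phi(t)=s$, which is harmless inside the $ds$-integral) reduces everything to the Chernoff-type bound $\bP(\sigma_\Phi(s)>t)\le (1-e^{-s\Phi(\lambda)})/(1-e^{-\lambda t})\le \Phi(\lambda)\,s/(1-e^{-\lambda t})$, yielding the linear decay $\bP(E_\Phi(t)\le s)\le C(t)\,s$ near the origin; after that, both arguments hinge on exactly the same dichotomy --- integrability of $s^{-\gamma}$ at $0$ for $\gamma<1$ versus a trivial bound for $s\ge 1$, and the linear CDF decay you prove is quantitatively the same as what the paper's bounded density gives. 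What each buys: the paper's proof is shorter once one accepts the cited regularity of $f_\Phi$ (which its standing hypotheses $a_\Phi=0$ and $b_\Phi>0$ or $\nu_\Phi(0,+\infty)=+\infty$ guarantee), and it sets up the split-the-density-integral device reused in Sections 3 and 4; your argument is more elementary and more general, since it requires neither the existence nor the local boundedness of the density of $E_\Phi(t)$ and would apply to any subordinator. Your Laplace-transform alternative --- $\cL_{t\to\lambda}\left[\E[E_\Phi^{-\gamma}(t)]\right]=\Gamma(1-\gamma)\Phi(\lambda)^{\gamma}/\lambda$ by Tonelli, upgraded from a.e.\ $t$ to all $t>0$ by the pathwise monotonicity of $t\mapsto E_\Phi^{-\gamma}(t)$ --- is also valid, and correctly uses the paper's formula for $\cL_{t\to\lambda}[f_\Phi(s;t)]$.
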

\begin{proof}
Let us recall that $\lim_{s \to 0^+}f_\Phi(s;t)=\bar{\nu}_\Phi(t):=\nu_\Phi(t,+\infty)$, that is finite since $\nu_\Phi$ is a L\'evy measure. Moreover, $s \mapsto f_\Phi(s;t)$ is continuous, thus $\Norm{f_\Phi(\cdot,t)}{L^\infty(0,1)}<+\infty$. Hence it holds
\begin{align*}
\E[E_\Phi^{-\gamma}(t)]&=\int_0^{+\infty}s^{-\gamma}f_\Phi(s;t)ds\\&=\int_0^{1}s^{-\gamma}f_\Phi(s;t)ds+\int_1^{+\infty}s^{-\gamma}f_\Phi(s;t)ds\\
&\le \Norm{f_\Phi(\cdot,t)}{L^\infty(0,1)}\int_0^{1}s^{-\gamma}ds+1<+\infty.
\end{align*}
\end{proof}
\section{The time-changed fractional Ornstein-Uhlenbeck process}\label{Sec2}
Let $(\Omega, \mathcal{F}, P)$ be a complete probability space supporting all \textcolor{black}{the} stochastic processes that will be considered below. Let us fix Hurst index $H \in \left(\frac{1}{2},1\right)$  and consider a fractional Brownian motion $B^H=\{B^H(t), t\ge 0\}$ with Hurst index $H$, i.e. a centered Gaussian process with covariance function given by $$\E[B^H(t)B^H(s)]=1/2\left(t^{2H}+s^{2H}-|t-s|^{2H}\right),\; s,t \in \mathbb{R}^+.$$
Let us also fix some number $\theta>0$ and introduce the fractional Ornstein-Uhlenbeck process (defined in \cite{cheridito2003fractional}) as
\begin{equation}\label{UHdef}
U_H(t)=e^{-\frac{t}{\theta}}\int_0^{t}e^{\frac{s}{\theta}}dB^H(s),\qquad  t\ge 0,
\end{equation}
which is a Gaussian process with one-dimensional density
\begin{equation}\label{pH}
p_H(t,x)=\frac{1}{\sqrt{2\pi V_{2,H}(t)}}e^{-\frac{x^2}{2V_{2,H}(t)}},  \ x \in \R, \ t \ge 0,
\end{equation}
where
$$V_{2,H}(t)=e^{-2\frac{t}{\theta}}\int_0^{t}\int_0^{t}e^{\frac{v+u}{\theta}}|u-v|^{2H-2}dudv, t\ge 0$$
is the variance of $U_H(t)$. Let us also denote by $V_{n,H}(t)=\E[|U_H(t)|^n]$ the $n$th absolute moment of $U_H(t)$.\\
Now, as done in \cite{ascione2019time}, we can construct the time-changed fractional Ornstein-Uhlenbeck process by considering a fractional Ornstein-Uhlenbeck process $U_H(t)$, together with  an independent inverse subordinator $E_\Phi(t)$, and   defining
\begin{equation*}
U_{H,\Phi}(t):=U_H(E_\Phi(t)).
\end{equation*}
Let us state some properties of $V_{n,H,\Phi}(t):=\E[|U_{H,\Phi}(t)|^{n}]$ (see \cite[Lemma $3.1$]{ascione2019time}).
\begin{prop}\label{prop:varprop}
	\begin{itemize}
		\item [$(i)$] $V_{2n,H,\Phi}(t)$ is finite for any $t>0$ and $n \in \N$.
		\item [$(ii)$] It holds that
		\begin{equation*}
		V_{2n,H,\Phi}(t)=\int_0^{+\infty}V_{2n,H}(s)f_\alpha(s,t)ds.
		\end{equation*}
		\item [$(iii)$] $V_{2n,H,\Phi}(t)$ is increasing in $t$ for any $n \in \N$ and
		\begin{equation*}
		\lim_{t \to +\infty}V_{2n,H,\Phi}(t)=V_{2n,H}(\infty)=\frac{\left(2\theta^{2H}H\Gamma(2H)\right)^n\Gamma\left(\frac{2n+1}{2}\right)}{\sqrt{\pi}}.
		\end{equation*}
	\end{itemize}
\end{prop}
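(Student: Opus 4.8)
The plan is to prove the three parts of Proposition \ref{prop:varprop} in order, since each relies on the previous one. Throughout I would exploit the conditioning structure coming from the independence of $U_H$ and $E_\Phi$, together with the absolute-moment formula for the Gaussian marginal of the fOU process.

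\textbf{Part $(i)$ and $(ii)$.} First I would condition on the inverse subordinator. Since $U_H$ and $E_\Phi$ are independent and $E_\Phi(t)$ has density $f_\alpha(s,t)$ (here $\Phi(\lambda)=\lambda^\alpha$, so $E_\Phi = E_\alpha$), the tower property gives
\begin{equation*}
V_{2n,H,\Phi}(t)=\E[|U_H(E_\Phi(t))|^{2n}]=\int_0^{+\infty}\E[|U_H(s)|^{2n}]f_\alpha(s,t)\,ds=\int_0^{+\infty}V_{2n,H}(s)f_\alpha(s,t)\,ds,
\end{equation*}
which is exactly the integral representation of $(ii)$. To establish finiteness in $(i)$, I would need a bound on $V_{2n,H}(s)$ near $s=0$ that is integrable against $f_\alpha(s,t)$. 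Since $U_H(s)$ is centered Gaussian, $V_{2n,H}(s)=c_n V_{2,H}(s)^n$ for the explicit Gaussian constant $c_n=(2n-1)!!$, so it suffices to control $V_{2,H}(s)^n$. The small-$s$ behaviour of $V_{2,H}(s)$ scales like $s^{2H}$, so $V_{2n,H}(s)$ behaves like $s^{2nH}$ near zero, which is harmless; the delicate point is the large-$s$ behaviour and the integrability of $f_\alpha$, which I would handle using Lemma \ref{lem:mominv} together with the self-similar scaling \eqref{eqftog} of $f_\alpha$.

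\textbf{Part $(iii)$.} Monotonicity in $t$ I would deduce from the integral representation and the stochastic monotonicity of the inverse subordinator: $E_\Phi(t)$ is nondecreasing in $t$, and $V_{2n,H}$ is increasing in its argument (again since $U_H(s)$ is a Gaussian whose variance $V_{2,H}(s)$ increases in $s$), so a coupling or a direct comparison of the mixing densities yields that $t\mapsto V_{2n,H,\Phi}(t)$ is increasing. For the limit as $t\to+\infty$, I would combine the known limit $\lim_{s\to+\infty}V_{2,H}(s)=V_{2,H}(\infty)=2\theta^{2H}H\Gamma(2H)$ for the fOU variance with the fact that $E_\Phi(t)\to+\infty$ almost surely as $t\to+\infty$. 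Passing this through the conditional representation (via dominated or monotone convergence, using the monotonicity from the first half of $(iii)$) gives
\begin{equation*}
\lim_{t\to+\infty}V_{2n,H,\Phi}(t)=V_{2n,H}(\infty)=c_n\,V_{2,H}(\infty)^n=\frac{\left(2\theta^{2H}H\Gamma(2H)\right)^n\Gamma\left(\frac{2n+1}{2}\right)}{\sqrt{\pi}},
\end{equation*}
where I use $c_n=(2n-1)!!=2^n\Gamma\!\left(\frac{2n+1}{2}\right)/\sqrt{\pi}$ to match the stated closed form.

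The main obstacle I anticipate is the finiteness argument in $(i)$: one must verify that the integral $\int_0^{+\infty}V_{2n,H}(s)f_\alpha(s,t)\,ds$ converges despite the growth of $V_{2n,H}(s)\sim c_n V_{2,H}(\infty)^n$ to a constant and the behaviour of the density. Since $V_{2,H}(s)$ is bounded (it increases to the finite limit $V_{2,H}(\infty)$), the integrand is actually dominated by a constant times $f_\alpha(s,t)$, which is a probability density; thus the integral is finite and in fact bounded by $c_n V_{2,H}(\infty)^n$. This observation simultaneously settles $(i)$ and supplies the dominating function needed for the limit computation in $(iii)$, so the boundedness of the fOU variance is the key structural fact that makes all three parts go through cleanly.
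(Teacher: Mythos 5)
Your argument is correct, and it is essentially the intended one: the paper offers no proof of this proposition, deferring instead to \cite[Lemma $3.1$]{ascione2019time}, and the natural route taken there is exactly your conditioning argument — independence of $U_H$ and $E_\Phi$ gives the mixture representation $(ii)$, the Gaussian identity $V_{2n,H}(s)=c_n\,V_{2,H}(s)^n$ with $c_n=(2n-1)!!=2^n\Gamma\left(\frac{2n+1}{2}\right)/\sqrt{\pi}$ reduces everything to the variance, and the boundedness and monotonicity of $V_{2,H}$ settle $(i)$ and $(iii)$. Your reading of $f_\alpha$ as $f_\Phi$ for general $\Phi$ is also the right one; the $\alpha$ in the statement is notation carried over from the stable case.

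Two points to fix or streamline. First, you assert $V_{2,H}(\infty)=2\theta^{2H}H\Gamma(2H)$, but by Lemma \ref{lem:Varlem} the correct value is $\theta^{2H}H\Gamma(2H)$, without the factor $2$. Taken literally, your value combined with $c_n=2^n\Gamma\left(\frac{2n+1}{2}\right)/\sqrt{\pi}$ would produce $\left(4\theta^{2H}H\Gamma(2H)\right)^n\Gamma\left(\frac{2n+1}{2}\right)/\sqrt{\pi}$, i.e.\ $4^n$ where the proposition has $2^n$; with the correct limit, the $2^n$ coming from $c_n$ is precisely what supplies the factor $2$ inside the parenthesis of the stated closed form, so the slip is only in that intermediate line. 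Second, the detour through Lemma \ref{lem:mominv} and the scaling relation \eqref{eqftog} is unnecessary: negative moments of $E_\Phi$ are needed when integrating $p_H(s,x)\sim V_{2,H}(s)^{-1/2}$ near $s=0$ (as in Theorem \ref{thm:conv}), whereas here the integrand satisfies $V_{2n,H}(s)\le c_n\,V_{2,H}(\infty)^n$, so it is bounded — your closing paragraph already makes this observation, and it alone proves $(i)$. Similarly, no coupling is needed in $(iii)$: $t\mapsto E_\Phi(t)$ is pathwise nondecreasing, and $V_{2,H}$ is nondecreasing since its derivative \eqref{Vprime} is nonnegative for $H>1/2$, so $V_{2n,H}(E_\Phi(t))$ is pathwise nondecreasing in $t$ and monotone (or dominated) convergence together with $E_\Phi(t)\to+\infty$ a.s.\ yields the limit, exactly as you sketch.
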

\begin{rmk} The fact that the asymptotic value in $(iii)$ does not depend on $\Phi$ is strictly connected to the nature of the time-change. Indeed, $E_\Phi(t)$ acts as a delay in the time-scale of $U_H(t)$, hence we expect $U_{H,\Phi}(t)$ to have the same asymptotic behaviour, despite behaving quite differently on the whole trajectories.
\end{rmk}
We will also need the following limits  for $V_{2,H}(t)$ and its derivative. They can  be obtained by \cite[Equation $29$]{ascione2019fractional} and \cite[Lemma $5.2$]{ascione2019time}.
\begin{lem}\label{lem:Varlem}
	Function $V_{2,H}$ satisfies the relations $$\lim_{t \to 0^+}\frac{V_{2,H}(t)}{t^{2H}}=1\; \text{and} \; \lim_{t \to +\infty}V_{2,H}(t)=\theta^{2H}H\Gamma(2H).$$ Moreover, $V_{2,H} \in C^1(0,+\infty)$ and its derivative satisfies the relations
	\begin{align*}
	\lim_{t \to 0^+}\frac{V'_{2,H}(t)}{t^{2H-1}}=2H\;  \text{and}\;\lim_{t \to +\infty}e^{\frac{t}{\theta}}t^{2-2H}V'_{2,H}(t)=2H(2H-1)\theta.
	\end{align*}
\end{lem}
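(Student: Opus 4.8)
The plan is to work directly from the explicit formula
\[
V_{2,H}(t)=c_H\,e^{-2t/\theta}G(t),\qquad G(t)=\int_0^t\int_0^t e^{(u+v)/\theta}|u-v|^{2H-2}\,du\,dv,
\]
where $c_H=H(2H-1)$ is the constant carried by the fBm covariance kernel for $H>1/2$, and to obtain the four limits by elementary changes of variables followed by dominated or monotone convergence. For the behaviour near $0$, I would rescale by $u=ts$, $v=tr$, which extracts a clean power:
\[
G(t)=t^{2H}\int_0^1\int_0^1 e^{t(s+r)/\theta}|s-r|^{2H-2}\,ds\,dr.
\]
Since $e^{t(s+r)/\theta}\to 1$ uniformly on $[0,1]^2$ and $e^{-2t/\theta}\to 1$, dominated convergence yields $V_{2,H}(t)/t^{2H}\to c_H\int_0^1\int_0^1|s-r|^{2H-2}\,ds\,dr$, and the elementary evaluation of the last integral as $1/(H(2H-1))$ produces the stated limit $1$.

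For the behaviour at infinity, the right move is the reflection $u=t-a$, $v=t-b$, which cancels the prefactor and gives $V_{2,H}(t)=c_H\int_0^t\int_0^t e^{-(a+b)/\theta}|a-b|^{2H-2}\,da\,db$. Monotone convergence lets me pass to the full quadrant $[0,\infty)^2$, and the orthogonal substitution $w=a+b$, $z=a-b$ reduces the double integral to $(2H-1)^{-1}\int_0^{\infty}e^{-w/\theta}w^{2H-1}\,dw=(2H-1)^{-1}\theta^{2H}\Gamma(2H)$; multiplying by $c_H$ collapses to $\theta^{2H}H\Gamma(2H)$.

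The $C^1$ assertion and the derivative at $0$ follow from a Leibniz rule applied to the square $[0,t]^2$: by symmetry $G'(t)=2e^{t/\theta}\int_0^t e^{v/\theta}(t-v)^{2H-2}\,dv$, so that $V'_{2,H}(t)=c_H e^{-2t/\theta}\bigl(G'(t)-\tfrac{2}{\theta}G(t)\bigr)$, which is continuous on $(0,+\infty)$. Near $0$ the term $\tfrac{2}{\theta}G(t)=O(t^{2H})=o(t^{2H-1})$ is negligible, while $G'(t)\sim 2t^{2H-1}/(2H-1)$, giving $V'_{2,H}(t)/t^{2H-1}\to 2H$.

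The hard part will be the derivative at infinity. Writing $A(t)=\int_0^t e^{-s/\theta}s^{2H-2}\,ds$ and $B(t)=\int_0^t\int_0^t e^{-(a+b)/\theta}|a-b|^{2H-2}\,da\,db$, the reflection identity recasts $V'_{2,H}(t)=2c_H\bigl(A(t)-\tfrac{1}{\theta}B(t)\bigr)$. The difficulty is that $A(\infty)=\theta^{2H-1}\Gamma(2H-1)=\tfrac{1}{\theta}B(\infty)$, so the limits cancel exactly (consistently with $V'_{2,H}(\infty)=0$) and the rate is governed entirely by the tails. I would estimate these by standard incomplete-gamma asymptotics: $A(\infty)-A(t)=\int_t^{\infty}e^{-s/\theta}s^{2H-2}\,ds\sim\theta e^{-t/\theta}t^{2H-2}$, while for $B$ the dominant contribution to $B(\infty)-B(t)$ comes from the two strips $\{a>t\}$ and $\{b>t\}$, on each of which the inner integral satisfies $\int_0^{\infty}e^{-b/\theta}|a-b|^{2H-2}\,db\sim\theta a^{2H-2}$ for large $a$ (a Watson-type endpoint estimate, the corner $\{a,b>t\}$ being of smaller order $e^{-2t/\theta}$); this yields $B(\infty)-B(t)\sim 2\theta^2 e^{-t/\theta}t^{2H-2}$. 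Substituting, the two tail contributions combine to $V'_{2,H}(t)\sim 2c_H\theta e^{-t/\theta}t^{2H-2}$, that is $e^{t/\theta}t^{2-2H}V'_{2,H}(t)\to 2H(2H-1)\theta$. The delicate points are justifying the cancellation rigorously and controlling the $a\to+\infty$ asymptotics of the inner integral uniformly enough to integrate it over the strip.
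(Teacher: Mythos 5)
Your proof is correct, but there is nothing in the paper to compare it with step by step: the lemma is stated with its proof omitted, deferred to Equation~29 of \cite{ascione2019fractional} and Lemma~5.2 of \cite{ascione2019time}, so what you have written is a self-contained replacement rather than a variant of the paper's argument. Two observations. First, you were right to reinstate the factor $c_H=H(2H-1)$: the paper's displayed formula for $V_{2,H}$ in Section~2 drops it, and without it both limits for $V_{2,H}$ would be off by exactly that constant; your evaluations $\int_0^1\int_0^1|s-r|^{2H-2}\,ds\,dr=1/(H(2H-1))$ and $H(2H-1)\cdot(2H-1)^{-1}\theta^{2H}\Gamma(2H)=\theta^{2H}H\Gamma(2H)$ confirm the normalization is the intended one. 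Second, the part you call hard dissolves if you differentiate \emph{after} reflecting rather than before: since $V_{2,H}=c_H B$, Leibniz on the square gives $V'_{2,H}(t)=c_H B'(t)=2c_H e^{-t/\theta}\int_0^t e^{-b/\theta}(t-b)^{2H-2}\,db=2H(2H-1)e^{-2t/\theta}\int_0^t e^{s/\theta}s^{2H-2}\,ds$, which is exactly formula \eqref{Vprime} quoted in Section~6 from \cite{ascione2019time}. This single nonnegative expression yields $e^{t/\theta}t^{2-2H}V'_{2,H}(t)=2H(2H-1)\int_0^t e^{s/\theta}s^{2H-2}\,ds\big/\bigl(e^{t/\theta}t^{2H-2}\bigr)\to 2H(2H-1)\theta$ by L'H\^{o}pital, with no cancellation of matching tails to justify. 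Your $A$/$B$ route is nonetheless sound as sketched: the corner equals $e^{-2t/\theta}B(\infty)$, the diagonal singularity in your Watson estimate contributes only an exponentially small term of order $e^{-a/(2\theta)}a^{2H-1}$, and since the combined tail coefficient $2\theta-\theta=\theta$ is nonzero, first-order asymptotics with $o\bigl(e^{-t/\theta}t^{2H-2}\bigr)$ errors suffice --- but the closed form makes the delicacy you flag unnecessary. The remaining steps (Leibniz differentiation legitimate because the corner contributes $O(h^{2H})=o(h)$, dominated convergence at $0$, monotone convergence and the $w=a+b$, $z=a-b$ substitution at infinity) are all fine.
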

In the next subsection we will consider some preliminary convergence results concerning the fractional Ornstein-Uhlenbeck process.
\subsection{Limit behaviour of the marginals of the fractional Ornstein-Uhlenbeck as $H \to 1/2$}
Now we want to study the one-dimensional convergence of $U_H(t)$ to the (classical) Ornstein-Uhlenbeck $U(t)$ as $H \to 1/2^+$. To do this, let us first study the convergence of the mean and the variance. Actually, since we asked for $U_H(0)=0$, we have $\E[U_H(t)]=0$ by equation \eqref{UHdef}. Thus we have to study the convergence of the variance. However, even more generally, let us study the convergence of all the moments.
\begin{prop}\label{prop:convmom}
	For any $n \ge 1$ it holds that $\lim_{H \to \frac{1}{2}^+}V_{n,H}(t)=V_{n,\frac{1}{2}}(t)$ \textcolor{black}{and} the convergence is uniform in $[0,+\infty)$.
\end{prop}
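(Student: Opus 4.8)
The plan is to reduce the whole statement to the single case $n=2$ by exploiting the Gaussianity of $U_H(t)$, and then to prove uniform convergence of the variance $V_{2,H}$ on $[0,+\infty)$. Since $U_H(t)$ is centered Gaussian with variance $V_{2,H}(t)$, one has
\[
V_{n,H}(t)=c_n\,V_{2,H}(t)^{n/2},\qquad c_n:=\frac{2^{n/2}}{\sqrt{\pi}}\Gamma\!\left(\frac{n+1}{2}\right),
\]
for every $H\in[1/2,1)$, so the statement follows once we know $V_{2,H}\to V_{2,\frac12}$ uniformly on $[0,+\infty)$. Indeed, by Lemma \ref{lem:Varlem} the quantity $L(H):=\theta^{2H}H\Gamma(2H)$ is continuous in $H$, so $M:=\sup_{H\in[1/2,1/2+\varepsilon]}L(H)<+\infty$, and (see below) $0\le V_{2,H}(t)\le L(H)\le M$ for all $t$; since $x\mapsto x^{n/2}$ is uniformly continuous on the compact range $[0,M]$, uniform convergence of $V_{2,H}$ transfers to uniform convergence of $V_{n,H}=c_n V_{2,H}^{n/2}$.

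For the variance I would first record the representation coming from the Wiener-integral isometry for the fractional Brownian motion, which after the change of variables $a=t-s$, $b=t-r$ reads
\[
V_{2,H}(t)=H(2H-1)\int_0^t\!\!\int_0^t e^{-\frac{a+b}{\theta}}|a-b|^{2H-2}\,da\,db .
\]
Two features of this formula drive the proof. First, the integrand is nonnegative and the domain $[0,t]^2$ grows with $t$, so $V_{2,H}(\cdot)$ is nondecreasing, with $V_{2,H}(0)=0$ and, by Lemma \ref{lem:Varlem}, $\lim_{t\to+\infty}V_{2,H}(t)=L(H)$; in particular $V_{2,H}(t)\le L(H)$, which justifies the uniform bound used above. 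Second, as $H\to 1/2^+$ the kernel $H(2H-1)|a-b|^{2H-2}$ acts as an approximate identity concentrating on the diagonal: one checks that $H(2H-1)\int_{a-h}^{a+h}|a-b|^{2H-2}\,db=2H\,h^{2H-1}\to 1$ while the mass away from the diagonal vanishes. Applying this to the continuous weight $e^{-(a+b)/\theta}$ gives the pointwise limit
\[
\lim_{H\to\frac12^+}V_{2,H}(t)=\int_0^t e^{-\frac{2a}{\theta}}\,da=\frac{\theta}{2}\left(1-e^{-\frac{2t}{\theta}}\right)=V_{2,\frac12}(t)
\]
for each fixed $t\ge 0$ (this is also the continuity in $H$ of the covariance recalled in the Introduction).

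Finally, I would upgrade pointwise to uniform convergence on the noncompact half-line by a P\'olya-type argument. Set $\widetilde F_H(t):=V_{2,H}(t)/L(H)$: by the previous paragraph this is a genuine distribution function on $[0,+\infty)$ (nondecreasing, $\widetilde F_H(0)=0$, $\widetilde F_H(+\infty)=1$). Since $L(H)\to L(1/2)=\theta/2$ and $V_{2,H}(t)\to V_{2,\frac12}(t)$, we obtain $\widetilde F_H\to\widetilde F_{1/2}$ pointwise, and the limiting distribution function $\widetilde F_{1/2}(t)=1-e^{-2t/\theta}$ is continuous. P\'olya's theorem then yields $\sup_{t\ge0}|\widetilde F_H(t)-\widetilde F_{1/2}(t)|\to0$, and multiplying back by the convergent scalars $L(H)\to L(1/2)$ (using $0\le\widetilde F_H\le1$) gives $\sup_{t\ge0}|V_{2,H}(t)-V_{2,\frac12}(t)|\to0$, as required.

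The genuinely delicate point is exactly this last passage from pointwise to uniform convergence on $[0,+\infty)$. A direct estimate of the tail $L(H)-V_{2,H}(t)$ for large $t$ would force one to track the cancellation between the prefactor $H(2H-1)$ and the divergent $\Gamma(2H-1)\sim(2H-1)^{-1}$ produced by the kernel as $H\to1/2^+$, precisely the regime where the asymptotics of Lemma \ref{lem:Varlem} are only available pointwise in $H$. The monotonicity of $V_{2,H}(\cdot)$ made transparent by the integral representation is what allows me to bypass this computation and invoke P\'olya instead.
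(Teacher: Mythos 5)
Your proposal is correct, but it reaches uniformity by a genuinely different route from the paper's. Both arguments reduce $n\ge 1$ to $n=2$ through the Gaussian moment identity $V_{n,H}=c_n V_{2,H}^{n/2}$; the paper then treats $n\ge 2$ via a Lipschitz bound for $x\mapsto x^{n/2}$ on the bounded range and handles $n=1$ separately by subadditivity of the square root, whereas your single appeal to uniform continuity of $x\mapsto x^{n/2}$ on the compact range $[0,M]$ covers all $n\ge 1$ at once and is cleaner. The real divergence is in proving $\Norm{V_{2,H}-V_{2,\frac{1}{2}}}{L^\infty(0,+\infty)}\to 0$: the paper imports from \cite{ascione2019fractional} the joint continuity of $(H,t)\mapsto V_{2,H}(t)$, extends it by continuity to the compactified axis $[0,+\infty]$, and concludes by Cantor's theorem on $\left[\frac{1}{2},\frac{3}{4}\right]\times[0,+\infty]$; the nontrivial content there is precisely that the extension is continuous at the points $(H,+\infty)$, i.e.\ a tail estimate uniform in $H$, which the paper asserts rather than proves. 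You instead extract monotonicity of $t\mapsto V_{2,H}(t)$ from the isometry representation --- and your prefactor $H(2H-1)$ is the correct one, evidently dropped in the paper's displayed formula for $V_{2,H}$, as one checks against the small-time limit of Lemma \ref{lem:Varlem} and against equation \eqref{Vprime} --- then normalize by $L(H)=\theta^{2H}H\Gamma(2H)$ and invoke P\'olya's theorem for monotone functions converging pointwise to a continuous limit with matching values at $0$ and $+\infty$. This makes the tail uniformity a consequence of monotonicity alone and renders the proof essentially self-contained: it needs only pointwise convergence in $t$ (your approximate-identity computation, or the continuity in $H$ already recalled from \cite{ascione2019fractional}), the limits of Lemma \ref{lem:Varlem}, and the elementary continuity of $H\mapsto L(H)$. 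Two harmless caveats: P\'olya's theorem is usually stated for sequences, but applying it along arbitrary sequences $H_n\to\frac{1}{2}^+$ yields the continuous-parameter statement; and your approximate-identity step is only sketched, though it is standard and in any case replaceable by the cited pointwise result.
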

\begin{proof}
	Let us first study the uniform convergence of $V_{2,H}$. Let us recall that, as shown in \cite{ascione2019fractional}, the function $(H,t) \in \left[\frac{1}{2},1\right)\times [0,+\infty) \to V_{2,H}(t)$ is continuous. Moreover, we can extend it by continuity to $[0,+\infty]$ by setting $V_{2,H}(+\infty)=\theta^{2H}H\Gamma(2H)$ and then we have that $(H,t) \in \left[\frac{1}{2},\frac{3}{4}\right]\times [0,+\infty] \to V_{2,H}(t)$ is uniformly continuous by Cantor's theorem. Uniform convergence as $H \to 1/2$ follows from this last property.\\
	Now recall that  $U_H(t)$, being a Gaussian process, admits the  following equalities for its absolute moments:
	\begin{equation}\label{eq:pass0}
	V_{n,H}(t)=\frac{2^{\frac{n}{2}}\Gamma\left(\frac{n+1}{2}\right)}{\sqrt{\pi}}(V_{2,H}(t))^\frac{n}{2}.
	\end{equation}
	Hence, we have that
	\begin{equation}\label{eq:pass1}
	 V_{n,H}(t)-V_{n,\frac{1}{2}}(t)=\frac{2^{\frac{n}{2}}\Gamma\left(\frac{n+1}{2}\right)}{\sqrt{\pi}}((V_{2,H}(t))^\frac{n}{2}-(V_{2,\frac{1}{2}}(t))^\frac{n}{2}).
	\end{equation}
	Let us consider $H \in \left[\frac{1}{2},\frac{3}{4}\right]$ and fix $V=\max_{H \in \left[\frac{1}{2},\frac{3}{4}\right]}V_{2,H}(\infty)$. For $n \ge 2$ set $L_1(n)=\frac{n}{2}V^{\frac{n}{2}-1}$ and observe that
	\begin{equation*}
	|(V_{2,H}(t))^\frac{n}{2}-(V_{2,\frac{1}{2}}(t))^\frac{n}{2}|\le L(n)|V_{2,H}(t)-V_{2,\frac{1}{2}}(t)| \le L_1(n)\Norm{V_{2,H}-V_{2,\frac{1}{2}}}{L^\infty(0,+\infty)},
	\end{equation*}
	where the term at the right-hand side is finite since both $V_{2,H}$ and $V_{2,\frac{1}{2}}$ are bounded.\\
	Taking the supremum in equation \eqref{eq:pass1}, we have
	\begin{equation*}
	\Norm{V_{n,H}-V_{n,\frac{1}{2}}}{L^\infty(0,+\infty)}\le L_2(n) \Norm{V_{2,H}-V_{2,\frac{1}{2}}}{L^\infty(0,+\infty)},
	\end{equation*}
	where $L_2(n)=\frac{2^{\frac{n}{2}}\Gamma\left(\frac{n+1}{2}\right)}{\sqrt{\pi}}L_1(n)$, and then we can take the limit as $H \to \frac{1}{2}^+$ to conclude that, for any $n \ge 2$,
	\begin{equation*}
	\lim_{H \to \frac{1}{2}^+}\Norm{V_{n,H}-V_{n,\frac{1}{2}}}{L^\infty(0,+\infty)}=0.
	\end{equation*}
	The case $n=1$ is different. Indeed, in this case, equation \eqref{eq:pass0} implies that $V_{1,H}(t)=\left(\frac{2}{\pi} V_{2,H}(t)\right)^{1/2}$. However, by subadditivity of $t \mapsto \sqrt{t}$, we have
	\begin{equation*}\begin{gathered}
	|V_{1,H}(t)-V_{1,\frac{1}{2}}(t)|\le \left(\frac{2}{\pi}\right)^{1/2}\left|\left(V_{2,H}(t)\right)^{1/2}-\left(V_{2,\frac{1}{2}}(t)\right)^{1/2}\right|\\\le \left(\frac{2}{\pi} |V_{2,H}(t)-V_{2,\frac{1}{2}}(t)|\right)^{1/2} 
	 \le \left(\frac{2}{\pi} \Norm{V_{2,H}-V_{2,\frac{1}{2}}}{L^\infty(0,+\infty)}\right)^{1/2}.
	\end{gathered}\end{equation*}
	Now, taking the supremum, we obtain
	\begin{equation*}
	\Norm{V_{1,H}-V_{1,\frac{1}{2}}}{L^\infty(0,+\infty)}\le \left(\frac{2}{\pi} \Norm{V_{2,H}-V_{2,\frac{1}{2}}}{L^\infty(0,+\infty)}\right)^{1/2}.
	\end{equation*}
	Taking the limit as $H \to \frac{1}{2}^+$ we finally achieve
	\begin{equation*}
	\lim_{H \to \frac{1}{2}^+}\Norm{V_{1,H}-V_{1,\frac{1}{2}}}{L^\infty(0,+\infty)}=0.
	\end{equation*}
\end{proof}
Now let us observe that, since $\E[U_H(t)]=0$, $V_{2,H}(t)\to V_{2,\frac{1}{2}}(t)$, and $U_H(t)$ is a Gaussian random variable, we have that, for fixed $t>0$, $U_H(t) \overset{d}{\to}U_{\frac{1}{2}}(t)$. Thus we already have the weak one-dimensional convergence of the fractional Ornstein-Uhlenbeck process to the classical one as $H \to 1/2^+$. However, we can improve this kind of convergence, showing that the density $p_H(x,t)$ converges uniformly to $p_{1/2}(x,t)$.
\begin{thm}\label{convp}
	It holds that
	$$\lim_{H \to \frac{1}{2}^+}p_{H}(t,x)=p_{\frac{1}{2}}(t,x)$$
	for any $t \in (0,+\infty)$ and for any $x \in \R$. Moreover, for any compact set $K \subset \R\setminus\{0\}$, $p_{H,\alpha} \to p_{\frac{1}{2},\alpha}$ uniformly in $[0,+\infty)\times K$ as $H \to \frac{1}{2}$.
\end{thm}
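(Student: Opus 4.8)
The plan is to reduce the whole statement to the convergence of the variance $V_{2,H}(t)$, already established in Proposition \ref{prop:convmom}, by noting that $p_H(t,x)$ depends on $H$ and $t$ only through $V_{2,H}(t)$. Write $g(v,x)=\frac{1}{\sqrt{2\pi v}}e^{-\frac{x^2}{2v}}$, so that $p_H(t,x)=g(V_{2,H}(t),x)$. For the pointwise claim I would fix $t\in(0,+\infty)$ and $x\in\R$; by Lemma \ref{lem:Varlem} the relation $V_{2,H}(t)/t^{2H}\to 1$ as $t\to 0^+$ forces $V_{2,H}(t)>0$, and Proposition \ref{prop:convmom} gives $V_{2,H}(t)\to V_{2,\frac{1}{2}}(t)>0$ as $H\to\frac{1}{2}^+$. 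Since $v\mapsto g(v,x)$ is continuous on $(0,+\infty)$, the pointwise convergence follows immediately.

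For the uniform statement, let $K\subset\R\setminus\{0\}$ be compact and set $m=\min_{x\in K}|x|>0$. The only delicate region is a neighbourhood of $t=0$, where $V_{2,H}(t)\to 0$ and $g(v,\cdot)$ blows up; I would therefore split $[0,+\infty)$ into $[0,\delta]$ and $[\delta,+\infty)$. On $[0,\delta]$ I would use that, by the proof of Proposition \ref{prop:convmom}, $(H,t)\mapsto V_{2,H}(t)$ is uniformly continuous on $\left[\frac{1}{2},\frac{3}{4}\right]\times[0,+\infty]$; since $V_{2,H}(0)=0$ for every $H$, given $\eta>0$ one can choose $\delta$ so that $V_{2,H}(t)<\eta$ for all $t\in[0,\delta]$ and all $H\in\left[\frac{1}{2},\frac{3}{4}\right]$. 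For $x\in K$ one then has $p_H(t,x)\le\frac{1}{\sqrt{2\pi V_{2,H}(t)}}e^{-\frac{m^2}{2V_{2,H}(t)}}=:\psi(V_{2,H}(t))$, and since $\psi(v)\to 0$ as $v\to 0^+$ and $\psi$ is increasing on $(0,m^2)$, choosing $\eta<m^2$ small makes $p_H$, and likewise $p_{\frac{1}{2}}$, uniformly small on $[0,\delta]\times K$, hence so is their difference.

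On $[\delta,+\infty)$ I would argue by compactness: the function $V_{2,H}(t)$ is continuous and strictly positive for $t>0$, so on the compact set $\left[\frac{1}{2},\frac{3}{4}\right]\times[\delta,+\infty]$ it is confined to an interval $[c,C]$ with $0<c\le C<+\infty$, uniformly in $H$ and $t$. On $[c,C]\times K$ the map $g$ is Lipschitz in its first variable with some constant $L$, whence
$$|p_H(t,x)-p_{\frac{1}{2}}(t,x)|\le L\,|V_{2,H}(t)-V_{2,\frac{1}{2}}(t)|\le L\,\Norm{V_{2,H}-V_{2,\frac{1}{2}}}{L^\infty(0,+\infty)},$$
which tends to $0$ as $H\to\frac{1}{2}^+$ by Proposition \ref{prop:convmom}. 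Combining the two regions — first fixing $\delta$ to control the difference near the origin, then sending $H\to\frac{1}{2}^+$ to control it on the complement — yields the claimed uniform convergence on $[0,+\infty)\times K$.

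The main obstacle is precisely the boundary layer at $t=0$: there the density is unbounded and naive continuity of $g$ in its first argument is useless, so the argument genuinely relies on two ingredients, namely the uniform smallness of $V_{2,H}(t)$ near $0$ coming from uniform continuity in $(H,t)$, and the hypothesis $0\notin K$, which is what allows the exponential factor $e^{-m^2/(2v)}$ to overwhelm the $v^{-1/2}$ blow-up and keep both densities uniformly small there.
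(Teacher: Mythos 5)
Your proof is correct, and it handles the one delicate step by a genuinely different mechanism than the paper, so a comparison is worthwhile. Both arguments share the same skeleton: since $p_H(t,x)=g(V_{2,H}(t),x)$ with $g(v,x)=\frac{1}{\sqrt{2\pi v}}e^{-\frac{x^2}{2v}}$, everything reduces to the uniform convergence $\Norm{V_{2,H}-V_{2,\frac{1}{2}}}{L^\infty(0,+\infty)}\to 0$ of Proposition \ref{prop:convmom} together with suitable regularity of $g$ on the range of the variances, and both exploit $0\notin K$ in an essential way. The difference lies in the treatment of the degenerate corner $v=V_{2,H}(t)\to 0$, i.e.\ $t$ near $0$. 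The paper avoids any splitting of the time axis: it computes $\nabla p$ for $p(v,x)=\frac{1}{\sqrt{2\pi v}}e^{-\frac{x^2}{2v}}$ explicitly, observes that for $x\in K$ both $p$ and $|\nabla p|$ extend continuously to $v=0$ with value $0$, and applies the mean value theorem on the compact box $[0,T]\times K$ (with $T=\max_{H\in[\frac{1}{2},\frac{3}{4}]}V_{2,H}(\infty)$) to obtain a single Lipschitz constant $C_5(K)$ valid uniformly down to $v=0$; the one-line estimate $|p_H-p_{\frac{1}{2}}|\le C_5(K)\Norm{V_{2,H}-V_{2,\frac{1}{2}}}{L^\infty(0,+\infty)}$ then finishes the proof. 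You instead split $[0,\delta]\cup[\delta,+\infty)$: near the origin you do not compare the two densities at all, but show both are uniformly small via the monotonicity of $\psi(v)=(2\pi v)^{-1/2}e^{-m^2/(2v)}$ on $(0,m^2)$ combined with the uniform continuity of $(H,t)\mapsto V_{2,H}(t)$ on $\left[\frac{1}{2},\frac{3}{4}\right]\times[0,+\infty]$ (which makes $\delta$ independent of $H$ --- your quantifier order here is sound); away from the origin the variances are confined to a compact $[c,C]\subset(0,+\infty)$, where Lipschitz continuity of $g$ in $v$ is immediate from compactness, with no gradient computation at the boundary needed. The paper's approach buys economy --- one global estimate, no $\varepsilon$-bookkeeping, a constant depending only on $K$ --- while yours buys elementarity, replacing the verification that $\nabla p$ extends continuously by zero at $v=0$ with the trivial observation that a Gaussian density evaluated off the origin vanishes as its variance does. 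One cosmetic point: for the pointwise claim, the positivity of $V_{2,H}(t)$ at a \emph{fixed} $t>0$ does not really follow from the small-time asymptotics of Lemma \ref{lem:Varlem} alone (which only governs $t\to 0^+$); it is cleaner to note it directly from the double-integral formula for $V_{2,H}(t)$, whose integrand is strictly positive, or from the strict monotonicity of $V_{2,H}$.
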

\begin{proof}
	We have already discussed the pointwise convergence of the densities, thus let us show the uniform convergence. Here, we suppose, without loss of generality, that $H \le \frac{3}{4}$. Consider $K \subset \R\setminus\{0\}$ a compact set. Let us suppose, also without loss of generality, that $K=[a,b]$ for some $a,b>0$. Define the function
	\begin{equation}\label{Heatkernel}
	p(t,x)=\frac{1}{\sqrt{2\pi t}}e^{-\frac{x^2}{2t}}
	\end{equation}
	for $(t,x)\in (0,+\infty)\times K $. We can extend it by continuity to $(t,x)\in [0,+\infty) \times K$ by setting $p(0,x)=0$. Now, differentiating $p$ with respect to $x$ and $t$, we obtain the equalities
	\begin{align}\label{eq:pass2}
	\pd{p}{t}(t,x)=\frac{1}{2\sqrt{2\pi}t^{\frac{3}{2}}}e^{-\frac{x^2}{2t}}\left(\frac{x^2-t}{t}\right) && \pd{p}{x}(t,x)=\frac{-x}{\sqrt{2\pi}t^{\frac{3}{2}}}e^{-\frac{x^2}{2t}}.
	\end{align}
	Let us consider the vector
	\begin{equation*}
	q(t,x)=\left(\frac{x^2-t}{t},-2x\right)
	\end{equation*}
	to obtain from equation \eqref{eq:pass2} that $\nabla p(t,x)=\frac{1}{2\sqrt{2\pi}t^{\frac{3}{2}}}e^{-\frac{x^2}{2t}}q(t,x)$. Obviously,
	\begin{equation*}
	|q(t,x)|=\frac{\sqrt{(x^2-t)^2+4t^2x^2}}{t},
	\end{equation*}
	and therefore
	\begin{equation*}
	|\nabla p(t,x)|=\frac{\sqrt{(x^2-t)^2+4t^2x^2}}{2\sqrt{2\pi}t^{\frac{5}{2}}}e^{-\frac{x^2}{2t}}.
	\end{equation*}
	Observe that we can extend   $|\nabla p(t,x)|$ by continuity to $[0,+\infty)\times K$  by setting $$|\nabla p(0,x)|=0.$$
	Let us now fix a compact set $K_2=[0,T]$ for some $T>0$ and let $(t,x),(s,y) \in K_2 \times K$. Since $K_2 \times K$ admits convex interior, we can apply Lagrange's Theorem to show that there exists a point $(\tau,z) \in [(t,x),(s,y)]$ (where $[(t,x),(s,y)]$ is the segment connecting $(t,x)$ to $(s,y)$) such that
	\begin{equation*}
	p(t,x)-p(s,y)=\langle \nabla p(\tau,z),(t-s,x-y)\rangle.
	\end{equation*}
	Taking the absolute value and using Cauchy-Schwartz inequality, we get that
	\begin{equation}\label{eq:pass4}
	|p(t,x)-p(s,y)|\le  |\nabla p(z,\tau)||(t-s,x-y)|.
	\end{equation}
	Finally, since we have verified that $|\nabla p(t,x)|$ is continuous in $K_2 \times K$ (that is compact), we can take the maximum, achieving
	\begin{equation*}
	|p(t,x)-p(s,y)|\le (\max_{(\tau,z)\in K_2 \times K}|\nabla p(\tau,z)|)|(t-s,x-y)|.
	\end{equation*}
	Now let us observe that the function $H \mapsto V_{2,H}(+\infty)$ is   continuous  on the interval  $\left[\frac{1}{2},\frac{3}{4}\right]$ for any $\theta>0$. Therefore,  we can introduce finite values  $$T=\max_{H \in \left[\frac{1}{2},\frac{3}{4}\right]}V_{2,H}(+\infty)\;\;\text{and}\;\;  C_5(K)=\max_{(\tau,z)\in K_2 \times K}|\nabla p(\tau,z)|,$$ accompanied by the compact set   $K_2=[0,T]$.  Observe also that, according to  \eqref{Heatkernel} and \eqref{pH}, $$p(V_{2,H}(t),x)=p_{H}(t,x).$$ Together with inequality   \eqref{eq:pass4}, it means that
	\textcolor{black}{\begin{align*}
		|p_H(t,x)-p_{\frac{1}{2}}(t,x)|&=|p(V_{2,H}(t),x)-p(V_{2,\frac{1}{2}}(t),x)|\\
		&\le
		C_5(K)|V_{2,H}(t)-V_{2,\frac{1}{2}}(t)|\\
		&\le C_5(K)\Norm{V_{2,H}-V_{2,\frac{1}{2}}}{L^\infty(0,+\infty)}.
		\end{align*}}
	Taking the supremum as $(t,x)\in [0,+\infty)\times K$ we obtain
	\begin{equation*}
	\Norm{p_H-p_{\frac{1}{2}}}{L^\infty([0,+\infty)\times K)}\le C_5(K)\Norm{V_{2,H}-V_{2,\frac{1}{2}}}{L^\infty(0,+\infty)}.
	\end{equation*}
	Finally, taking the limit as $H \to 1/2^+$, we conclude the proof.
\end{proof}
Now,  having exploited the main convergence results on the fractional Ornstein-Uhlenbeck process, we can focus on the convergence results for the time-changed one.
\section{The limit distribution of the TCfOU process}\label{Sec3}
Let us first explore the limit distribution of the TCfOU process as $t \to +\infty$. It is well known that (for negative drift parameters) the fractional Ornstein-Uhlenbeck process is ergodic and admits Gaussian limit distribution with density function
\begin{equation*}
p_H(\infty,x)=\frac{1}{\sqrt{2\pi V_H}}e^{-\frac{x^2}{2V_H}}
\end{equation*}
where $V_H:=V_{2,H}(\infty)$. By using this result we can exploit the limit distribution of the TCfOU process.
\begin{prop}
	Let $p_{H,\Phi}(t,x)$ be the probability density function of $U_{H,\Phi}(t)$ (that exists by \cite[Proposition $4.1$]{ascione2019time}). Then it holds that
	\begin{equation}\label{convedens}
	\lim_{t \to +\infty}p_{H,\Phi}(t,x)=p_H(\infty,x).
	\end{equation}
	Moreover, $U_{H,\Phi}(t)\overset{d}{\to}Z$ as $t \to +\infty$, where $Z \sim \cN(0,V_H)$.
\end{prop}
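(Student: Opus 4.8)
The plan is to exploit the subordination structure: since $E_\Phi$ is independent of the fractional Ornstein--Uhlenbeck process, conditioning on the value of $E_\Phi(t)$ yields the mixture representation
\begin{equation*}
p_{H,\Phi}(t,x)=\int_0^{+\infty}p_H(s,x)f_\Phi(s;t)\,ds=\E\left[p_H(E_\Phi(t),x)\right].
\end{equation*}
Two analytic inputs feed the argument. First, by Lemma \ref{lem:Varlem} the map $s\mapsto V_{2,H}(s)$ is continuous, increasing, and tends to $V_H=\theta^{2H}H\Gamma(2H)$ as $s\to+\infty$; consequently $s\mapsto p_H(s,x)$ extends continuously to $s=+\infty$ with value $p_H(\infty,x)$, for each fixed $x$. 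Second, since $\sigma_\Phi$ is a subordinator (hence finite at every fixed time and drifting to $+\infty$), its inverse satisfies $E_\Phi(t)\to+\infty$ almost surely as $t\to+\infty$.

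First I would establish the pointwise convergence \eqref{convedens}. Combining the two inputs above, $p_H(E_\Phi(t),x)\to p_H(\infty,x)$ almost surely as $t\to+\infty$, for every fixed $x\in\R$, so it remains only to justify passing to the limit under the expectation. Since the Gaussian factor is bounded by $1$, we have the uniform-in-$x$ bound $p_H(s,x)\le p_H(s,0)=(2\pi V_{2,H}(s))^{-1/2}$, and the near-zero asymptotics $V_{2,H}(s)\sim s^{2H}$ from Lemma \ref{lem:Varlem}, together with the positivity and monotonicity of $V_{2,H}$, give $p_H(s,x)\le C(s^{-H}+1)$ for all $s>0$. The main obstacle is that this bound degenerates as $s\to 0^+$, precisely the regime where $E_\Phi(t)$ could be small; to control it uniformly in $t$ I would exploit the monotonicity of $t\mapsto E_\Phi(t)$, so that for $t\ge 1$ one has $E_\Phi(t)\ge E_\Phi(1)$ and hence
\begin{equation*}
p_H(E_\Phi(t),x)\le C\left(E_\Phi(1)^{-H}+1\right),\qquad t\ge 1,
\end{equation*}
a dominating random variable independent of both $t$ and $x$. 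Since $H\in(0,1)$, Lemma \ref{lem:mom1} ensures $\E[E_\Phi(1)^{-H}]<+\infty$, so dominated convergence yields \eqref{convedens} for every $x$.

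Finally, the weak convergence follows almost for free. Since $p_{H,\Phi}(t,\cdot)$ and $p_H(\infty,\cdot)$ are all probability densities and $p_{H,\Phi}(t,x)\to p_H(\infty,x)$ for every $x$, Scheff\'e's lemma upgrades the pointwise convergence to convergence in $L^1(\R)$, hence to convergence in distribution; as $p_H(\infty,\cdot)$ is the density of $\cN(0,V_H)$, this gives $U_{H,\Phi}(t)\overset{d}{\to}Z$ with $Z\sim\cN(0,V_H)$. Alternatively, I could bypass the densities and read off the weak limit directly from the characteristic function $\E[e^{i\xi U_{H,\Phi}(t)}]=\E[e^{-\xi^2 V_{2,H}(E_\Phi(t))/2}]$, which tends to $e^{-\xi^2 V_H/2}$ by bounded convergence (the integrand is dominated by $1$), with L\'evy's continuity theorem then delivering the same conclusion. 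I expect the only genuine difficulty to be the uniform-in-$t$ domination near the singularity at $x=0$, which the monotonicity of $E_\Phi$ together with the moment bound of Lemma \ref{lem:mom1} resolves cleanly.
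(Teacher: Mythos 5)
Your proposal is correct and follows essentially the same route as the paper: the mixture representation $p_{H,\Phi}(t,x)=\E[p_H(E_\Phi(t),x)]$, the almost sure divergence $E_\Phi(t)\to+\infty$, dominated convergence with the singularity at small $s$ controlled through the asymptotics $V_{2,H}(s)\sim s^{2H}$ of Lemma \ref{lem:Varlem} and the negative-moment bound of Lemma \ref{lem:mom1}, and the passage from density convergence to convergence in distribution. If anything, your uniform-in-$t$ dominating variable $C\bigl(E_\Phi(1)^{-H}+1\bigr)$, obtained from the monotonicity of $t\mapsto E_\Phi(t)$, makes the dominated convergence step more explicit than the paper's version, which establishes finiteness of $\E\bigl[V_{2,H}(E_\Phi(t))^{-1/2}\bigr]$ at each fixed $t$ without exhibiting a single $t$-independent dominating function.
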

\begin{proof}
	Let us observe that if we are able to establish  equality \eqref{convedens}, then the weak convergence result directly follows (since the convergence of the densities implies the weak convergence).\\
	It follows  from \cite[Proposition $4.1$]{ascione2019time}  that 
	\begin{equation}\label{eq:pass5}
	p_{H,\Phi}(t,x)=\int_0^{+\infty}p_H(s,x)f_\Phi(s;t)ds=\E[p_H(E_\Phi(t),x)],
	\end{equation}
	where $p_H$ is defined by equality \eqref{pH}.\\
	Now fix $x \not = 0$. Then we have that $p_{H}(t,x)\le C(x,H)$. Thus, recalling that $\lim_{t \to +\infty}E_\Phi(t)=+\infty$ almost surely, we have by dominated convergence theorem
	\begin{equation*}
	\lim_{t \to +\infty}p_{H,\Phi}(t,x)=\E[p_H(x,\infty)]=p_H(x,\infty).
	\end{equation*}
	Now let us consider $x=0$. Then we have, still from equality \eqref{pH}, that
	\begin{equation*}
	p_{H}(t,0)=C(H)\frac{1}{\sqrt{V_{2,H}(t)}}
	\end{equation*}
	We only need to show that $\E\left[\frac{1}{\sqrt{V_{2,H}(E_\Phi(t))}}\right]<+\infty$. Let us start with the following upper bounds: 
	\begin{align*}
	\E\left[\frac{1}{\sqrt{V_{2,H}(E_\Phi(t))}}\right]&=\int_0^{+\infty}\frac{1}{\sqrt{V_{2,H}(s)}}f_\Phi(s;t)ds\\
	&=\int_0^{1}\frac{1}{\sqrt{V_{2,H}(s)}}f_\Phi(s;t)ds+\int_1^{+\infty}\frac{1}{\sqrt{V_{2,H}(s)}}f_\Phi(s;t)ds\\
	&=I_1+I_2.
	\end{align*}
	Concerning $I_2$, obviously $V_{2,H}(s)\ge V_{2,H}(1)$ as $s \ge 1$, therefore  
	\begin{equation*}
	I_2\le \frac{1}{\sqrt{V_{2,H}(1)}}.
	\end{equation*}
	Concerning $I_1$, since $\lim_{t \to 0^+}\frac{V_{2,H}(t)}{t^{2H}}=1$ by Lemma \ref{lem:Varlem}, there exists a constant $C(H)$ such that $V_{2,H}(t)\ge C(H) t^{2H}$ for any $t \in [0,1]$. Thus it follows from Lemma \ref{lem:mom1} that
	\begin{equation*}
	I_1 \le C(H)\E[E_\Phi^{-H}(t)]<+\infty.
	\end{equation*}
	We get that $\E\left[\frac{1}{\sqrt{V_{2,H}(E_\Phi(t))}}\right]<+\infty$ and then we can use dominated convergence theorem to conclude that limit relation  \eqref{convedens} holds even if $x=0$.
	\end{proof}
\begin{rmk}
	This result was expected. Indeed, the action of the time-change consists in change of the time scale that generally does not affect the limit distributions.
\end{rmk}
\section{Weak one-dimensional convergence of the TCfOU process and uniform convergence of the marginals}\label{Sec4}
Let us denote by $U(t)$ a classical Ornstein-Uhlenbeck process and by $U_\Phi(t):=U(E_\Phi(t))$ a time-changed Ornstein-Uhlenbeck process, where $E_\Phi(t)$ is independent of $U(t)$. This kind of process has been introduced in \cite{leonenko2013fractional} for the stable case and \cite{gajda2015time} for the general case. Here we want to discuss the one-dimensional convergence of the TCfOU process to the time-changed Ornstein-Uhlenbeck process. To do this, let us actually demonstrate some more strong convergence of the probability density function.
\begin{thm}\label{thm:conv}
	Let $p_{H,\Phi}(t,x)$ be the probability density function of $U_{H,\Phi}(t)$ and $p_{\frac{1}{2},\Phi}(t,x)$ the probability density function of $U_\Phi(t)$. Then it holds
	\begin{equation}\label{convH12}
	\lim_{H \to \frac{1}{2}^+}p_{H,\Phi}(t,x)=p_{\frac{1}{2},\Phi}(t,x)
	\end{equation}
	for any $t>0$ and $x \in \R$. Moreover, for any compact set $K \subset \R\setminus\{0\}$ it holds $p_{H,\Phi}\to p_{\frac{1}{2},\Phi}$ uniformly in $[0,+\infty) \times K$ as $H \to \frac{1}{2}^+$.
\end{thm}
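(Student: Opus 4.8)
The plan is to lift the fractional-level convergence of Theorem \ref{convp} to the subordinated densities through the representation \eqref{eq:pass5}. Writing $p_{H,\Phi}(t,x)=\E[p_H(E_\Phi(t),x)]$ and, by the same conditioning argument applied to the classical process, $p_{\frac{1}{2},\Phi}(t,x)=\E[p_{\frac{1}{2}}(E_\Phi(t),x)]$, the triangle inequality gives
\[
|p_{H,\Phi}(t,x)-p_{\frac{1}{2},\Phi}(t,x)|\le \E\left[\,|p_H(E_\Phi(t),x)-p_{\frac{1}{2}}(E_\Phi(t),x)|\,\right],
\]
so that everything reduces to controlling the inner difference along the random, $[0,+\infty)$-valued time argument $E_\Phi(t)$.

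For the uniform statement, fix a compact $K\subset\R\setminus\{0\}$ and set $\eta(H):=\Norm{p_H-p_{\frac{1}{2}}}{L^\infty([0,+\infty)\times K)}$, which tends to $0$ as $H\to\frac{1}{2}^+$ by Theorem \ref{convp}. Since $E_\Phi(t)\in[0,+\infty)$ for every realization and $x\in K$, the integrand above is pointwise bounded by $\eta(H)$; hence $\Norm{p_{H,\Phi}-p_{\frac{1}{2},\Phi}}{L^\infty([0,+\infty)\times K)}\le\eta(H)\to0$. This settles the uniform convergence and, in particular, the pointwise limit \eqref{convH12} for every $t\ge0$ and every $x\ne0$.

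It then remains to treat $x=0$, where $p_H(s,0)=(2\pi V_{2,H}(s))^{-1/2}$ diverges as $s\to0^+$, so the bound above is useless and I instead pass to the limit inside the expectation by dominated convergence. For fixed $t>0$ one has $E_\Phi(t)>0$ almost surely, and $V_{2,H}(s)\to V_{2,\frac{1}{2}}(s)$ for each $s>0$ by Proposition \ref{prop:convmom}, so the integrand converges almost surely. To dominate it uniformly for $H\in[\frac{1}{2},\frac{3}{4}]$, I will establish lower bounds of the form $V_{2,H}(s)\ge m\,s^{3/2}$ on $(0,1]$ and $V_{2,H}(s)\ge m_1$ on $[1,+\infty)$, with $m,m_1>0$ independent of $H$. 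Granting these, $p_H(s,0)$ is dominated by $(2\pi m)^{-1/2}s^{-3/4}\mathbf{1}_{(0,1]}(s)+(2\pi m_1)^{-1/2}\mathbf{1}_{(1,+\infty)}(s)$, which is $f_\Phi(\cdot;t)$-integrable since $\E[E_\Phi^{-3/4}(t)]<+\infty$ by Lemma \ref{lem:mom1} (here $3/4\in(0,1)$); dominated convergence then yields $p_{H,\Phi}(t,0)\to p_{\frac{1}{2},\Phi}(t,0)$.

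The main obstacle is precisely this uniform-in-$H$ lower bound near $s=0$: Lemma \ref{lem:Varlem} only gives $V_{2,H}(s)/s^{2H}\to1$ for each fixed $H$, whereas I need the non-degeneracy uniformly over $H\in[\frac{1}{2},\frac{3}{4}]$. The plan is to define $W(H,s):=V_{2,H}(s)/s^{2H}$ for $s>0$ and $W(H,0):=1$, and to argue that $W$ is continuous and strictly positive on the compact set $[\frac{1}{2},\frac{3}{4}]\times[0,1]$; its minimum $m>0$ then gives $V_{2,H}(s)\ge m\,s^{2H}\ge m\,s^{3/2}$ on $(0,1]$ (using $s\le1$ and $2H\le\frac{3}{2}$), while monotonicity of $V_{2,H}$ together with continuity and positivity of $H\mapsto V_{2,H}(1)$ yields $m_1:=\min_{H\in[1/2,3/4]}V_{2,H}(1)>0$. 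The delicate point is the continuity of $W$ at the edge $s=0$, i.e. the uniformity in $H$ of the small-time asymptotics of $V_{2,H}$, which I expect to extract from the integral representation of $V_{2,H}$ and the estimates underlying Lemma \ref{lem:Varlem} in \cite{ascione2019fractional}.
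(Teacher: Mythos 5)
Your proposal is correct and follows essentially the same route as the paper: the paper likewise works from the representation $p_{H,\Phi}(t,x)=\int_0^{+\infty}p_H(s,x)f_\Phi(s;t)\,ds$, dominates the integrand by splitting at $s=1$, using monotonicity and $\min_{H\in[\frac{1}{2},\frac{3}{4}]}V_{2,H}(1)$ on $[1,+\infty)$, bounding $(H,s)\mapsto s^{H}/\sqrt{V_{2,H}(s)}$ (your $W(H,s)^{-1/2}$, extended by the value $1$ at $s=0$) on the compact $\left[\frac{1}{2},\frac{3}{4}\right]\times[0,1]$, and invoking $\E[E_\Phi^{-3/4}(t)]<+\infty$ from Lemma \ref{lem:mom1}, then obtains the uniform statement from the trivial bound $\Norm{p_{H,\Phi}-p_{\frac{1}{2},\Phi}}{L^\infty([0,+\infty)\times K)}\le \Norm{p_{H}-p_{\frac{1}{2}}}{L^\infty((0,+\infty)\times K)}$ together with Theorem \ref{convp}. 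The one point you flag as delicate --- joint continuity of $W$ at the edge $s=0$, uniformly in $H$ --- is exactly the step the paper also asserts without detailed proof (deferring to the estimates of \cite{ascione2019fractional}), so your treatment is at parity with the paper's.
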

\begin{proof}
	It follows from  equality  \eqref{eq:pass5} that for any $H \in \left[\frac{1}{2},1\right)$
	\begin{equation*}
	p_{H,\Phi}(t,x)=\int_0^{+\infty}p_{H}(s,x)f_\Phi(s;t)ds.
	\end{equation*}
	We have $p_H(s,x)\le \frac{1}{\sqrt{2\pi V_{2,H}(s)}}$. Consider $H \in \left[\frac{1}{2},\frac{3}{4}\right]$ and let us split the integral in two parts:
	\begin{align*}
	p_{H,\Phi}(t,x)&=\int_0^{+\infty}p_{H}(s,x)f_\Phi(s;t)ds\\
	&\le \frac{1}{\sqrt{2\pi}}\left(\int_0^1 \frac{1}{\sqrt{ V_{2,H}(s)}}f_\Phi(s;t)ds+\int_1^{+\infty} \frac{1}{\sqrt{ V_{2,H}(s)}}f_\Phi(s;t)ds\right)\\
	&=\frac{1}{\sqrt{2\pi}}(I_1(H)+I_2(H)).
	\end{align*}
	Let us denote by $V_1=\min_{H \in \left[\frac{1}{2},\frac{3}{4}\right]}V_{2,H}(1)$ to achieve $V_{2,H}(s)\ge V$ for any $s \ge 1$, thus dominating the integrand of $I_2(H)$. Concerning $I_1(H)$, we have
	\begin{equation*}
	I_1(H)=\int_0^1 \frac{s^{H}}{\sqrt{V_{2,H}(s)}}s^{-H}f_\Phi(s;t)ds.
	\end{equation*}
	Now let us observe that for $s \in (0,1)$ we have that $s^{-H}\le s^{-3/4}$. On the other hand, the function $(H,s)\in \left[\frac{1}{2},\frac{3}{4}\right]\times (0,1] \mapsto \frac{s^{H}}{\sqrt{V_{2,H}(s)}}$ is continuous and positive. Moreover, it can be extended by continuity setting $(H,0)\mapsto 1$. Thus the aforementioned function admits a maximum $V_2>0$ and we deduce that
	\begin{equation*}
	\frac{s^{H}}{\sqrt{V_{2,H}(s)}}s^{-H}\le V_2 s^{-3/4}.
	\end{equation*}
	In conclusion, we obtained the following upper bounds  for any $H \in \left[\frac{1}{2},\frac{3}{4}\right]$:
	\begin{equation*}
	p_{H}(s,x)\le \frac{1}{\sqrt{2\pi}}\begin{cases} 1/\sqrt{V_1} & s>1 \\
	  V_2 s^{-3/4} & 0 \le s \le 1,
	  \end{cases}
	 \end{equation*}
	 that is integrable with respect to $f_\Phi(s;t)ds$ since $\E[E_\Phi^{-3/4}(t)]<+\infty$ by Lemma \ref{lem:mom1}.
	 Thus, by dominated convergence theorem, we achieve equation \eqref{convH12} by taking the limit as $t \to +\infty$ in equation \eqref{eq:pass5}.\\
	 Concerning uniform convergence, let us fix a compact $K \subseteq \R \setminus\{0\}$. Moreover, still by using equation \eqref{eq:pass5}, we have
	 \begin{equation*}
	 |p_{H,\Phi}(t,x)-p_{\frac{1}{2},\Phi}(t,x)|\le \Norm{p_{H}-p_{\frac{1}{2}}}{L^\infty((0,+\infty)\times K)}
	 \end{equation*}
	 and, taking the supremum,
	 \begin{equation*}
	 \Norm{p_{H,\Phi}-p_{\frac{1}{2},\Phi}}{L^\infty((0,+\infty) \times K)}\le \Norm{p_{H}-p_{\frac{1}{2}}}{L^\infty((0,+\infty) \times K)}.
	 \end{equation*}
	 Thus, taking the limit as $H \to \frac{1}{2}^+$, we conclude the proof by using Theorem \ref{convp}.
\end{proof}
As for the fractional Ornstein-Uhlenbeck process, we can establish  also the uniform convergence of the absolute moments.
\begin{prop}
	It holds that $V_{n,H,\Phi}\to V_{n,\frac{1}{2},\Phi}$ as $H \to \frac{1}{2}^+$ uniformly in $[0,+\infty)$.
\end{prop}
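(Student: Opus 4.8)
The plan is to reduce the statement entirely to Proposition \ref{prop:convmom} by exploiting the subordination structure, so that the time change contributes nothing beyond the total mass of the density $f_\Phi$. The first step is to record the integral representation
$$V_{n,H,\Phi}(t)=\int_0^{+\infty}V_{n,H}(s)f_\Phi(s;t)\,ds,$$
valid for every $n\ge 1$ and every $H\in\left[\frac{1}{2},1\right)$. This follows by conditioning on the value of $E_\Phi(t)$ and using the independence of $E_\Phi$ and $U_H$: since $U_{H,\Phi}(t)=U_H(E_\Phi(t))$, we have $\E[|U_{H,\Phi}(t)|^n]=\E[V_{n,H}(E_\Phi(t))]$, and the latter expectation is precisely the claimed integral against the density $f_\Phi(\cdot;t)$ of $E_\Phi(t)$. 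Proposition \ref{prop:varprop} records this only for even orders, but the derivation is identical for all $n$, and finiteness of the moments is guaranteed as in the even case.

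The second step is an elementary estimate. Subtracting the representations for $H$ and for $\frac{1}{2}$ and using that $f_\Phi(\cdot;t)$ is a probability density, i.e. $\int_0^{+\infty}f_\Phi(s;t)\,ds=1$, we obtain for every $t\ge 0$
$$|V_{n,H,\Phi}(t)-V_{n,\frac{1}{2},\Phi}(t)|\le\int_0^{+\infty}|V_{n,H}(s)-V_{n,\frac{1}{2}}(s)|f_\Phi(s;t)\,ds\le\Norm{V_{n,H}-V_{n,\frac{1}{2}}}{L^\infty(0,+\infty)}.$$
Since the right-hand side does not depend on $t$, taking the supremum over $t\in[0,+\infty)$ yields
$$\Norm{V_{n,H,\Phi}-V_{n,\frac{1}{2},\Phi}}{L^\infty(0,+\infty)}\le\Norm{V_{n,H}-V_{n,\frac{1}{2}}}{L^\infty(0,+\infty)}.$$
The final step is to let $H\to\frac{1}{2}^+$ and invoke Proposition \ref{prop:convmom}, by which the right-hand side tends to $0$; this gives the desired uniform convergence.

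As for the main obstacle: there is essentially no analytic difficulty here, because the entire $t$-dependence is absorbed into the total mass $1$ of $f_\Phi(\cdot;t)$ and therefore disappears from the bound. The only point requiring a little care is the justification of the subordination formula for general $n$ rather than only the even orders recorded in Proposition \ref{prop:varprop}, together with the finiteness needed to carry the supremum through. This is handled exactly as in the even case and poses no real trouble, so the proof is genuinely a one-line consequence of the already-established uniform convergence of $V_{n,H}$.
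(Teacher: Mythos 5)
Your proof is correct and follows essentially the same route as the paper: the subordination representation $V_{n,H,\Phi}(t)=\int_0^{+\infty}V_{n,H}(s)f_\Phi(s;t)\,ds$, the bound $|V_{n,H,\Phi}(t)-V_{n,\frac{1}{2},\Phi}(t)|\le\Norm{V_{n,H}-V_{n,\frac{1}{2}}}{L^\infty(0,+\infty)}$ obtained from the unit total mass of $f_\Phi(\cdot;t)$, and then Proposition~\ref{prop:convmom}. If anything you are slightly more careful than the paper, which cites Proposition~\ref{prop:varprop} although that statement records the representation only for even orders, a gap you explicitly note and close.
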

\begin{proof}
	Consider $H \in \left[\frac{1}{2},\frac{3}{4}\right]$. Let us recall that, by Proposition \ref{prop:varprop},
	\begin{equation*}
	V_{n,H,\Phi}(t)=\int_0^{+\infty}V_{n,H}(s)f_\Phi(s;t)
	\end{equation*}
	and then
	\begin{equation*}
	|V_{n,H,\Phi}(t)-V_{n,\frac{1}{2},\Phi}(t)|\le \Norm{V_{n,H}-V_{n,\frac{1}{2}}}{L^\infty(0,+\infty)}.
	\end{equation*}
	Taking the supremum as $t \in (0,+\infty)$ and then the limit as $H \to \frac{1}{2}^+$ we conclude the proof by means of Proposition \ref{prop:convmom}.
\end{proof}
Actually, we are not happy with the one-dimensional convergence. Hence, in the next section, we want to show a functional limit theorem for the TCfOU process as $H \to 1/2$.
\section{A functional limit theorem for the TCfOU process}\label{Sec5}
Let us consider the function space $C=C(0,+\infty)$ equipped with the uniform norm. Let $U_{H,\Phi}=\{U_{H,\Phi}(t), \ t \ge 0\}$ and $U_{\Phi}=\{U_{\Phi}(t), \ t \ge 0\}$. Let us also set $U_H=\{U_{H}(t), \ t \ge 0\}$ and $U=\{U(t), \ t \ge 0\}$. All these four processes can be seen as $C$-valued random variables and then we can study the weak convergence in $C$ of such random variables. In particular, let us denote by $C^*$ the dual space of $C$ (i.e. the space of continuous linear functionals on $C$). Then we say that, for a sequence of $C$-valued random variables $(X_n)_{n \in \N}$ and a $C$-valued random variable $X \in C$, it holds $X_n \Rightarrow X$ if and only if for any $\cF \in C^*$ it holds $\E[\cF(X_n)] \to \E[\cF(X)]$ as $n \to +\infty$. Let us observe that such definition is actually valid for any $B$-valued sequence of random variables, where $B$ is a metric space and $B^*$ is its dual. In particular, for any $\gamma \in (0,1]$, let us denote by ${\rm Lip}_\gamma([0,T])$ the space of H\"older-continuous functions on $[0,T]$ with exponent $\gamma$, which is a Banach space when equipped with the norm
\begin{equation*}
\Norm{f}{{\rm Lip}_\gamma([0,T])}=\sup_{\substack{(t,s) \in [0,T]^2\\ t \not = s}}\frac{|f(t)-f(s)|}{|t-s|^\gamma}+\Norm{f}{L^\infty(0,T)}.
\end{equation*}
Let us focus on $U_H$ and $U$. In this connection, let us first consider $t \ge s \ge 0$ and define $R_H(t,s)=\Cov(U_H(t),U_H(s))$. We can state that  (see \cite{mishura2018stochastic})
\begin{equation}\label{covcov1}
\begin{gathered}
R_H(t,s)=\frac{H\sigma^2}{2}\left(-e^{\frac{ s-t}{\theta}}\int_0^{t-s}e^{\frac{z}{\theta}}
z^{2H-1}dz+e^{\frac{t-s}{\theta}}\int_{t-s}^{t}e^{-\frac{z}{\theta}}z^{2H-1}dz\right.\\\left. -e^{-\frac{t+s}{\theta}}\int_{s}^{t}e^{\frac{z}{\theta}}z^{2H-1}dz+e^{ \frac{ s-t}{\theta}}
\int_{0}^{s}e^{-\frac{z}{\theta}}z^{2H-1}dz +2e^{-\frac{t+s}{\theta}}\int_0^{t}e^{\frac{z}{\theta}}z^{2H-1}dz\right).
\end{gathered}
\end{equation}
For any $t,s \in (0,+\infty)$, set $C_H(t,s)=\Cov(U_H(t),U_H(s))$. In particular we have
\begin{equation*}
	C_H(t,s)=\begin{cases} R_H(t,s) & t \ge s \ge 0 \\
	R_H(s,t) & 0 \le t<s. \end{cases}
\end{equation*}
As $t \not = s$ it is not difficult to check that $C_H(t,s)$ admits both partial derivatives and that such partial derivatives are continuous. Moreover, we have $R_H(t,s) \to 0$ as $t \to +\infty$, hence $\pd{C_H}{t}(t,s)\to 0$ for any $s \in (0,+\infty)$. The same holds inverting the roles of $t$ and $s$, since $C_H$ is symmetric. Finally, as $t \to s^-$, we have that both $\pd{}{t}R_H(t,s)$ and $\pd{}{s}R_H(t,s)$ can be extended by continuity. However, such functions are not continuous also with respect to $H \in \left[\frac{1}{2},\frac{3}{4}\right]$. However, it can be still shown by simple, but cumbersome, calculations that there exists a constant independent of $H$ such that
\begin{equation}\label{Lipschitzcontr}
\Norm{\pd{}{t}C_H(t,s)}{L^\infty((0,+\infty)\times (0,+\infty))}+\Norm{\pd{}{s}C_H(t,s)}{L^\infty((0,+\infty)\times (0,+\infty))}\le L
\end{equation}
for any $H \in \left[\frac{1}{2},\frac{3}{4}\right]$. This is enough to show the following result.
\begin{thm}
	It holds $U_H \Rightarrow U$ in $C$ as $H \to \frac{1}{2}^+$. Moreover, for any $T>0$ and any linear functional $\cF \in {\rm Lip_{\gamma}}^*([0,T])$ for $\gamma<1/2$ it holds $\E[\cF(U_H)] \to \E[\cF(U)]$.
\end{thm}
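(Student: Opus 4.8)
The plan is to obtain the weak convergence in $C$ through the two classical ingredients, convergence of the finite-dimensional distributions together with tightness, and then to push the tightness up to the H\"older scale in order to get the second, stronger assertion. Throughout I work on a fixed interval $[0,T]$ (convergence in $C(0,+\infty)$ with the locally uniform topology being equivalent to convergence in $C([0,T])$ for every $T$) and keep $H \in \left[\frac{1}{2},\frac{3}{4}\right]$.

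First I would establish convergence of the finite-dimensional distributions. Since $U_H$ and $U$ are centered Gaussian processes, it suffices to prove $C_H(t,s) \to C_{\frac{1}{2}}(t,s)$ as $H \to \frac{1}{2}^+$ for every fixed $t,s$. This follows from the explicit expression \eqref{covcov1}: each integral $\int_a^b e^{\pm z/\theta}z^{2H-1}dz$ is continuous in $H$ (the integrands converge and are dominated on the relevant bounded intervals), and the exponential prefactors are continuous in $H$ as well, whence $R_H \to R_{\frac{1}{2}}$ and therefore $C_H \to C_{\frac{1}{2}}$. Convergence of the centered Gaussian characteristic functions then gives $(U_H(t_1),\dots,U_H(t_k)) \overset{d}{\to} (U(t_1),\dots,U(t_k))$ for every $t_1,\dots,t_k$.

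Next I would prove tightness in $C([0,T])$ uniformly in $H$, the crucial estimate being an increment bound extracted from \eqref{Lipschitzcontr}. For $t>s$, writing $\E[|U_H(t)-U_H(s)|^2]=C_H(t,t)-2C_H(t,s)+C_H(s,s)$ and integrating the partial derivatives (each of which is bounded by $L$), one obtains
\begin{equation*}
\E[|U_H(t)-U_H(s)|^2]\le 2L|t-s|.
\end{equation*}
As $U_H(t)-U_H(s)$ is centered Gaussian, for every integer $p\ge 1$ this yields
\begin{equation*}
\E[|U_H(t)-U_H(s)|^{2p}]=(2p-1)!!\,\big(\E[|U_H(t)-U_H(s)|^2]\big)^p\le (2p-1)!!\,(2L)^p|t-s|^p,
\end{equation*}
with a constant independent of $H$. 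Taking $p=2$ gives $\E[|U_H(t)-U_H(s)|^4]\le C|t-s|^2$, so Kolmogorov's criterion yields tightness in $C([0,T])$ uniformly in $H$ (recall $U_H(0)=0$, so the one-point marginals are trivially tight). Together with the finite-dimensional convergence this gives $U_H \Rightarrow U$ in $C$, proving the first assertion.

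For the second assertion I would use the same moment bound at higher order. Fix $\gamma<\frac{1}{2}$, choose $\gamma_0\in(\gamma,\frac{1}{2})$ and then an integer $p$ large enough that $\gamma_0<\tfrac{1}{2}-\tfrac{1}{2p}$. Feeding $\E[|U_H(t)-U_H(s)|^{2p}]\le C_p|t-s|^p$ into the Garsia--Rodemich--Rumsey inequality (and using $U_H(0)=0$ to control the sup part) produces
\begin{equation*}
\sup_{H\in\left[\frac{1}{2},\frac{3}{4}\right]}\E\big[\Norm{U_H}{{\rm Lip}_{\gamma_0}([0,T])}^{2p}\big]<+\infty.
\end{equation*}
Since the embedding ${\rm Lip}_{\gamma_0}([0,T])\hookrightarrow{\rm Lip}_{\gamma}([0,T])$ is compact for $\gamma<\gamma_0$, Markov's inequality turns this uniform moment bound into tightness of $\{U_H\}$ in ${\rm Lip}_{\gamma}([0,T])$; combined with the finite-dimensional convergence (which identifies every subsequential limit as $U$) it gives weak convergence of $U_H$ to $U$ in ${\rm Lip}_{\gamma}([0,T])$, hence $\E[\cF(U_H)]\to\E[\cF(U)]$ for every $\cF\in{\rm Lip}_{\gamma}^{*}([0,T])$. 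I expect this last step to be the main obstacle: the H\"older space is not separable, so one cannot argue weak convergence directly there, and the passage through the uniform GRR bounds in the finer exponent $\gamma_0$ followed by the compact embedding into the rougher exponent $\gamma$ is exactly the device that restores tightness and thus the functional convergence on the H\"older topology.
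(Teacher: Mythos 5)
Your proposal is correct and takes essentially the same route as the paper: finite-dimensional convergence of the centered Gaussian laws via continuity of $C_H$ in $H$, the increment bound $\E[|U_H(t)-U_H(s)|^{2p}]\le C_p|t-s|^p$ (uniform in $H\in[\frac{1}{2},\frac{3}{4}]$) extracted from \eqref{Lipschitzcontr} by the mean value theorem, and Kolmogorov's criterion for tightness in $C$. The only divergence is that where you prove the H\"older assertion by hand — a Garsia--Rodemich--Rumsey moment bound in ${\rm Lip}_{\gamma_0}([0,T])$ with $\gamma<\gamma_0<\frac{1}{2}$ followed by the compact embedding ${\rm Lip}_{\gamma_0}\hookrightarrow{\rm Lip}_{\gamma}$ to recover tightness in the non-separable target space — the paper simply invokes Lamperti's Corollary $2$, which packages exactly this mechanism, so your argument amounts to a self-contained proof of the cited result.
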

\begin{proof}
	Let us observe that $U_H$ is a Gaussian process with zero mean and covariance function $C_H(t,s)$ that is continuous for $H \in \left[\frac{1}{2},\frac{3}{4}\right] \times [0,+\infty)\times [0,+\infty)$. Thus we know that $U_H$ converges toward $U$ in any $d$-dimensional distributions. Now let us recall that $U_H(t)-U_H(s)$ is a Gaussian random variable whose variance is given by $(C_H(t,t)-2C_H(t,s)+C_H(s,s))$. Thus we have that  for any $n \ge 1$ the next relations follow from  equality  \eqref{eq:pass0}:
	\begin{equation}\label{eq:pass7}
	\E[(U_H(t)-U_H(s))^{2n}]=\frac{2^n\Gamma\left(n+\frac{1}{2}\right)}{\sqrt{\pi}}(C_H(t,t)-2C_H(t,s)+C_H(s,s))^n.
	\end{equation}
	Let us suppose, without loss of generality, that $s \le t$. There exist two constants $\xi,\eta \in [s,t]$ such that, by Lagrange theorem,
	\begin{equation*}
	C_H(t,t)-2C_H(t,s)+C_H(s,s)=\left(\pd{}{t}C_H(t,\xi)+\pd{}{s}C_H(\eta,s)\right)(t-s).
	\end{equation*}
	Taking the absolute value, we get the inequalities 
	\begin{equation*}\begin{gathered}
	|C_H(t,t)-2C_H(t,s)+C_H(s,s)|\\ \le \left(\left|\pd{}{t}C_H(t,\xi)\right|+\left|\pd{}{s}C_H(\eta,s)\right|\right)|t-s| \le L|t-s|,
	\end{gathered}\end{equation*}
	where the constant $L$ is defined according to  \eqref{Lipschitzcontr}.
	Denoting
	\begin{equation*}
	C_n(L)=\frac{(2L)^n\Gamma\left(n+\frac{1}{2}\right)}{\sqrt{\pi}},
	\end{equation*}
	we obtain from equality \eqref{eq:pass7} that 
	\begin{equation*}
	\E[(U_H(t)-U_H(s))^{2n}]\le C_n(L)|t-s|^n.
	\end{equation*}
	Hence, in particular, by Kolmogorov's tightness criterion (that can be applied for a non-bounded time interval by means of a truncation argument, as shown in \cite{totoki1962method}), we can use Prokhorov'theorem to conclude that $U_H \Rightarrow U$ in $C$.\\
	Moreover, for any fixed $T>0$, we are also under the hypotheses of \cite[Corollary $2$]{lamperti1962convergence}, that ensures the weak convergence in ${\rm Lip}_\gamma$, concluding the proof.
\end{proof}
\begin{rmk}
The bound $\gamma<1/2$ is sharp since $U$ belongs to ${\rm Lip_{\gamma}}([0,T])$ for $\gamma<1/2$ but not for $\gamma=1/2$.
\end{rmk}
Now we need to provide  a similar result for $U_{H,\Phi}$ and $U_{\Phi}$. However, we have to consider a different space. In particular, let us consider the set $D$ of cadlag functions on $[0,+\infty)$ and $\Lambda$ the set of strictly increasing functions $g:[0,+\infty) \to [0,+\infty)$. Let $\iota \in \Lambda$ be the identity function on $[0,+\infty)$. Then we can define the Skorohod $J_1$ metric on $D$ as, for any couple of functions $f_1,f_2 \in D$,
\begin{equation*}
d_{J_1}(f_1,f_2)=\inf_{g \in \Lambda}\max\{\Norm{f_1 \circ g-f_2}{L^\infty(0,+\infty)},\Norm{g-\iota}{L^\infty(0,+\infty)}\}.
\end{equation*}
With this metric, the set $(D,d_{J_1})$ is a metric space (that we will denote only as $D$), thus we can consider the notion of weak convergence of $D$-valued random variables, which is actually weaker than the weak convergence in $C$.\\
With this in mind, let us show the following result.
\begin{thm}
	It holds $U_{H,\Phi} \Rightarrow U_\Phi$ in $D$ as $H \to \frac{1}{2}^+$.
\end{thm}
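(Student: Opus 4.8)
The plan is to deduce the statement from the already-established convergence $U_H\Rightarrow U$ in $C$ by means of a continuous mapping argument applied to the time-change (composition) map, exploiting that $U_{H,\Phi}=U_H\circ E_\Phi$ and $U_\Phi=U\circ E_\Phi$ arise from the untimed processes through the \emph{same} inverse subordinator $E_\Phi$, which is independent of $U_H$ for every $H$ and takes values, almost surely, among the non-decreasing functions of $D$. The whole difficulty is then packaged into understanding the continuity of the map $(f,g)\mapsto f\circ g$.

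First I would promote the marginal convergence of the outer process to the joint convergence
\begin{equation*}
(U_H,E_\Phi)\Rightarrow (U,E_\Phi)\qquad\text{in }C\times D\quad\text{as }H\to\tfrac12^+.
\end{equation*}
Since $E_\Phi$ does not depend on $H$ and is independent of $U_H$, the law of $(U_H,E_\Phi)$ is the product $\mathrm{Law}(U_H)\otimes\mathrm{Law}(E_\Phi)$. For a bounded continuous $F\colon C\times D\to\R$, integrating first in the outer variable and using $U_H\Rightarrow U$ gives, for each fixed path $g$, that $\int F(f,g)\,\mathrm{Law}(U_H)(df)\to\int F(f,g)\,\mathrm{Law}(U)(df)$; since these inner integrals are uniformly bounded by $\Norm{F}{\infty}$, a bounded-convergence argument in the inner variable $g$ yields $\E[F(U_H,E_\Phi)]\to\E[F(U,E_\Phi)]$, which is the desired joint convergence (the limit carrying the product law, consistent with the independence of $U$ and $E_\Phi$).

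The central step is the $J_1$-continuity of the composition map $\Psi\colon C\times D_\uparrow\to (D,d_{J_1})$, $\Psi(f,g)=f\circ g$, where $D_\uparrow\subset D$ denotes the non-decreasing functions. I would show that $\Psi$ is continuous at every $(f,g)$ with $f\in C$: if $f_n\to f$ uniformly on compact sets and $g_n\to g$ in $J_1$, pick time-changes $\lambda_n\in\Lambda$ with $\Norm{\lambda_n-\iota}{L^\infty(0,+\infty)}\to 0$ and $g_n\circ\lambda_n\to g$ uniformly on compacts; then, on a compact time interval with $M$ bounding all the arguments,
\begin{equation*}
|f_n(g_n(\lambda_n(t)))-f(g(t))|\le \Norm{f_n-f}{L^\infty(0,M)}+|f(g_n(\lambda_n(t)))-f(g(t))|,
\end{equation*}
and both summands vanish, the second by the uniform continuity of $f$ on $[0,M]$ together with $g_n\circ\lambda_n\to g$. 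This is exactly $\Psi(f_n,g_n)\to\Psi(f,g)$ in $J_1$, and it is the reason the statement must be phrased in $D$ with the $J_1$ metric rather than in $C$ with the uniform norm, so that the continuous-mapping machinery becomes available. In particular, since the first coordinate of $(U,E_\Phi)$ is almost surely continuous, this pair avoids the discontinuity set of $\Psi$ with probability one.

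Finally I would combine the two ingredients: the continuous mapping theorem applied to $(U_H,E_\Phi)\Rightarrow(U,E_\Phi)$ and to $\Psi$ yields $\Psi(U_H,E_\Phi)\Rightarrow\Psi(U,E_\Phi)$, that is $U_{H,\Phi}\Rightarrow U_\Phi$ in $D$. I expect the main obstacle to be the middle step, namely establishing the $J_1$-continuity of the composition map: one must track the Skorohod time-changes $\lambda_n$ carefully and use both the monotonicity of $E_\Phi$ (which keeps $f_n\circ g_n$ inside $D$) and the continuity of the limit $U$ to rule out spurious oscillations; these are precisely the features that force the weakening from the uniform topology to $J_1$.
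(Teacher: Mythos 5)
Your proposal is correct and takes essentially the same route as the paper: upgrade $U_H \Rightarrow U$ to joint convergence $(U_H,E_\Phi)\Rightarrow (U,E_\Phi)$ via independence, then apply the continuous mapping theorem to the composition map, whose a.s.\ continuity at points of $C\times D_{\uparrow}$ is exactly the fact the paper imports from Whitt (Theorems $13.2.2$ and $3.4.3$). The only difference is that you supply direct proofs of the two cited ingredients (the Fubini argument for joint convergence and the $J_1$-continuity of $(f,g)\mapsto f\circ g$ at continuous-outer, nondecreasing-inner pairs), and both of your arguments are sound.
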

\begin{proof}
	Let us first observe that $C$ is a closed subspace of $D$, hence $D^*$ is contained in $C^*$. Thus the fact that $U_{H}\Rightarrow U$ in $C$ implies the same convergence in $D$. Now let us consider the coupled processes $(U_H,L_\Phi)$ and $(U,L_\Phi)$. Since $L_\Phi$ is independent of both $U_H$ and $U$, we get $(U_H,L_\Phi) \Rightarrow (U,L_\Phi)$ in $D \times D$. Now let us observe that $U \in C$ almost surely and $L_\Phi \in D_{\uparrow}$ almost surely, where $D_{\uparrow}=\{f \in D: \ f\mbox{ is increasing}\}$. Now let us denote by $g$ the composition map on $D \times D$, i.e. $g(x,y)=x \circ y \in D$, and with ${\rm Disc}(g)$ the set of discontinuity points of $g$. \\
	It follows from  \cite[Theorem $13.2.2$]{whitt2002stochastic} (see also \cite{silvestrov2012limit}  for a survey on continuity conditions for the composition map) that $${\rm Disc}(g)\subseteq (D \times D)\setminus ((C \times D_{\uparrow})\times (D \times D_{\uparrow \uparrow})),$$
	where $D_{\uparrow \uparrow}=\{f \in D: \ f\mbox{ is strictly increasing}\}$. Taking into account the inclusion  $(U,L_\Phi) \in C \times D_{\uparrow}$ almost surely, we get  that
	\begin{equation*}
	\bP((U,L_\Phi)\in {\rm Disc}(g))=0.
	\end{equation*}
	Thus we can use the continuous mapping theorem (see \cite[Theorem $3.4.3$]{whitt2002stochastic}) to obtain the desired convergence.
\end{proof}
\section{Convergence properties of the generalized Fokker-Planck equation associated to the TCfOU}\label{Sec6}
Let us now consider only the case in which $a_\Phi=b_\Phi=0$ and $\nu_\Phi(0,+\infty)=+\infty$. It is established in  \cite{ascione2019time}   that $p_{H,\Phi}$ is solution (in some sense) of a generalized Fokker-Planck equation. To introduce such equation, we first need to define some suitable operators, following the results of  \cite{toaldo2015convolution} and \cite{ascione2020alpha}.
\begin{defn}\label{def1}
	The \textbf{Caputo-type non-local derivative} induced by $\Phi$ of an absolutely continuous function $u:[0,T]\to X$, where $X$ is a suitable Banach space, is defined as
	\begin{equation*}
		\partial^\Phi_t u(t)=\int_0^t \bar{\nu}_\Phi(t-\tau)u'(\tau)d\tau,
	\end{equation*}
	where $\bar{\nu}_\Phi(t)=\nu_\Phi(t,+\infty)$.\\
	We define the \textbf{$\Phi$-subordination} operator $\cS_\Phi: L^\infty(\R^+;X) \to L^\infty(\R^+;X)$ as
	\begin{equation*}
	\cS_\Phi v(t)=\int_0^{+\infty}v(s)f_\Phi(s;t)ds
	\end{equation*}
	and the \textbf{weighted $\Phi$-subordination} operator $\cS_{\Phi,H}:L^\infty(\R^+;X) \to L^\infty(\R^+;X)$ as
	\begin{equation*}
	\cS_{\Phi,H} v(t)=\int_0^{+\infty}V'_{2,H}(s)v(s)f_\Phi(s;t)ds.
	\end{equation*}
	Let us recall that, as proven in \cite{ascione2020alpha}, these two operators are continuous and injective.\\
	We define the \textbf{weighted Laplace transform} as
	\begin{equation*}
	L_H v(\lambda)=\cL_{t \to \lambda}[V'_{2,H}(t)v(t)]
	\end{equation*}
	where $\cL_{t \to \lambda}$ is the Laplace transform operator acting on $t$.\\
	Arguing as we did in \cite{ascione2020alpha}, let us recall that we can choose $c_1<0<c_2$ such that $c_1-c_2>-1/\theta$ and $\Phi^{-1}$ is defined on the vertical line $$r_{c_2}=\{\lambda \in \C: \ \lambda=c_2+iz, \ z \in \R\}$$ in such a way that $\Phi^{-1}(r_{c_2}) \subseteq \bH:=\{\lambda \in \C: \ \Re(\lambda)>0\}$. For any function $v:\bH \to \C$, let us define the operator $\widehat{L}_{H,\Phi}$ as
	\begin{multline}\label{defhatL}
	\widehat{L}_{H,\Phi}v(\lambda)=\frac{1}{4\pi^2}\int_0^{+\infty}e^{-\Phi(\lambda) t}\\\times \lim_{R \to +\infty}\int_{-\infty}^{+\infty}e^{(c_1+iw)t}\int_{-R}^{R}\cL_{t \to \lambda}[V'_{2,H}(t)](c_1-c_2+i(w-z))\\ \times\frac{\Phi^{-1}(c_2+iz)}{c_2+iz}v(\Phi^{-1}(c_2+iz))dzdwdt,
	\end{multline}
	whenever the involved integrals exist.\\
	Finally, we define the \textbf{generalized Fokker-Planck operator} $\cF_{H,\Phi}$ acting on $v \in L^\infty(\R^+; C^2(I))$ as
	\begin{equation*}
	\cF_{H,\Phi}v(t,x)=\cL^{-1}_{\lambda \to t}\left[\frac{\Phi(\lambda)}{\lambda}\pdsup{}{x}{2}\widehat{L}_{H,\Phi}\cL_{s \to \lambda}[v(s,x)]\right],
	\end{equation*}
	whenever all the involved operators are well-defined.
\end{defn}
Once we have defined the involved operators, we can consider the following \textbf{generalized Fokker-Planck equation} as
\begin{equation}\label{genFP}
\partial_t^\Phi v(t,x)=\frac{1}{2}\cF_{H,\Phi}v(t,x), \qquad x \in I, \ t>0,
\end{equation}
where $I \subseteq \R$.\\
In the case of subordinated functions, i.e. functions $v_\Phi=S_\Phi v$ for some $v \in L^\infty(\R^+;C^2(I))$, the operator $\widehat{L}_{H,\Phi}$ can be simplified. Indeed, we have the following result (see \cite[Proposition $4.5$]{ascione2020alpha}).
\begin{prop}\label{prop:simp}
Let $v_\Phi(t,x)=\cS_\Phi v(t,x)$ for some $v \in L^\infty(\R^+;C^2(I))$, where $I \subset \R$ is an interval and such that one of the following properties hold:
\begin{itemize}
	\item[$(a)$] $v$ is Lipschitz and $x \in \R \mapsto \cL[v](c_2+ix)$ belongs to $L^1(\R)$;
	\item[$(b)$] $v$ belongs to $L^2(\R^+)$ and $x \in \R \mapsto \cL[v](c_2+ix)$ belongs to $L^2(\R)$,
\end{itemize}
where $c_1,c_2$ are chosen in such a way that $c_1<0<c_2$ and $c_1-c_2>-1/\theta$ and $\Phi^{-1}$ is well-defined on the vertical line $r_{c_2}$. Then, for any $\lambda \in \bH$, it holds
\begin{equation*}
\widehat{L}_{H,\Phi}\cL_{t \to \lambda}[v_\Phi(t,x)](\lambda)=L_{H}v(\Phi(\lambda),x).
\end{equation*}
\end{prop}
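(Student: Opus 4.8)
The plan is to make the defining formula \eqref{defhatL} explicit on the subordinated input and to exhibit a cancellation that collapses the multiple integral into a single Laplace transform evaluated at the point $\Phi(\lambda)$. First I would compute $\cL_{t\to\lambda}[v_\Phi(t,x)]$ for $\lambda\in\bH$. Since $v_\Phi=\cS_\Phi v$, Fubini together with the known transform $\cL_{t\to\lambda}[f_\Phi(s;t)]=\frac{\Phi(\lambda)}{\lambda}e^{-s\Phi(\lambda)}$ gives
\begin{equation*}
\cL_{t\to\lambda}[v_\Phi(t,x)]=\int_0^{+\infty}v(s,x)\,\frac{\Phi(\lambda)}{\lambda}e^{-s\Phi(\lambda)}\,ds=\frac{\Phi(\lambda)}{\lambda}\,\cL_{s\to\Phi(\lambda)}[v(s,x)].
\end{equation*}
Abbreviating $\tilde v(\mu,x)=\cL_{s\to\mu}[v(s,x)]$, this reads $\cL_{t\to\lambda}[v_\Phi(t,x)]=\frac{\Phi(\lambda)}{\lambda}\tilde v(\Phi(\lambda),x)$.

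Next I would insert this expression in place of the generic argument in \eqref{defhatL}. The integrand there evaluates the input at $\mu=\Phi^{-1}(c_2+iz)$, for which $\Phi(\mu)=c_2+iz$, and multiplies it by $\frac{\Phi^{-1}(c_2+iz)}{c_2+iz}$, so that
\begin{equation*}
\frac{\Phi^{-1}(c_2+iz)}{c_2+iz}\cdot\frac{\Phi(\mu)}{\mu}\tilde v(\Phi(\mu),x)=\frac{\Phi^{-1}(c_2+iz)}{c_2+iz}\cdot\frac{c_2+iz}{\Phi^{-1}(c_2+iz)}\tilde v(c_2+iz,x)=\tilde v(c_2+iz,x).
\end{equation*}
The factors $\Phi^{-1}(c_2+iz)/(c_2+iz)$ cancel exactly against the $\Phi/\mathrm{id}$ factor produced by the subordination transform. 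After this simplification \eqref{defhatL} becomes, writing $G(\mu)=\cL_{t\to\mu}[V'_{2,H}(t)]$,
\begin{equation*}
\widehat{L}_{H,\Phi}\cL_{t\to\lambda}[v_\Phi(t,x)](\lambda)=\frac{1}{4\pi^2}\int_0^{+\infty}e^{-\Phi(\lambda)t}\lim_{R\to+\infty}\int_{-\infty}^{+\infty}e^{(c_1+iw)t}\int_{-R}^{R}G(c_1-c_2+i(w-z))\,\tilde v(c_2+iz,x)\,dz\,dw\,dt.
\end{equation*}

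The heart of the argument is then to recognize the inner double integral as the product $V'_{2,H}(t)v(t,x)$. Substituting $\zeta=w-z$ at fixed $z$ factorizes the exponential as $e^{(c_1+iw)t}=e^{(c_1-c_2+i\zeta)t}e^{(c_2+iz)t}$ and separates the variables, yielding
\begin{equation*}
\frac{1}{4\pi^2}\lim_{R\to+\infty}\int_{-R}^{R}\left(\int_{-\infty}^{+\infty}G(c_1-c_2+i\zeta)e^{(c_1-c_2+i\zeta)t}\,d\zeta\right)\tilde v(c_2+iz,x)e^{(c_2+iz)t}\,dz.
\end{equation*}
The $\zeta$-integral is $2\pi$ times the Bromwich inversion of $G$ along the line $\Re=c_1-c_2$; since $c_1-c_2>-1/\theta$ and Lemma \ref{lem:Varlem} shows $V'_{2,H}(t)\sim2Ht^{2H-1}$ at $0$ and decays like $e^{-t/\theta}$ at infinity, this line lies in the strip of analyticity of $G$ and the integral returns $V'_{2,H}(t)$. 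The remaining $z$-integral is $2\pi$ times the Bromwich inversion of $\tilde v$ along $\Re=c_2>0$, returning $v(t,x)$ as $R\to+\infty$. Hence the inner double integral equals $V'_{2,H}(t)v(t,x)$, and the outer $t$-integral against $e^{-\Phi(\lambda)t}$ is precisely $\cL_{t\to\Phi(\lambda)}[V'_{2,H}(t)v(t,x)]=L_H v(\Phi(\lambda),x)$, which is the claim.

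The main obstacle is one of rigor rather than of strategy: all the interchanges above—applying Fubini to split the $(\zeta,z)$ integration, passing the limit $R\to+\infty$ through, and validating the two Bromwich inversions pointwise—must be justified, and this is exactly the role of hypotheses $(a)$ and $(b)$. Under $(b)$ the assumptions $v\in L^2(\R^+)$ and $\cL[v](c_2+i\cdot)\in L^2(\R)$ place the $z$-inversion in the Plancherel framework, where the $L^2$ limit and Fubini are standard. Under $(a)$ the condition $\cL[v](c_2+i\cdot)\in L^1(\R)$ gives absolute convergence of the $z$-integral and a pointwise inversion, while the Lipschitz regularity of $v$ guarantees the inversion formula holds at every $t$. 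In both cases the good behaviour of $G$ from Lemma \ref{lem:Varlem} controls the $\zeta$-integral, so the interchanges are legitimate and the formal computation becomes a proof.
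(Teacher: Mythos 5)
The paper offers no proof of this proposition to compare against: it is imported verbatim from \cite[Proposition 4.5]{ascione2020alpha}, which this text simply cites. Judged on its own merits, your argument is correct in outline and is almost certainly the intended one, since definition \eqref{defhatL} is engineered precisely for this collapse: the identity $\cL_{t\to\lambda}[v_\Phi(t,x)]=\frac{\Phi(\lambda)}{\lambda}\cL_{s\to\Phi(\lambda)}[v(s,x)]$ (Fubini plus $\cL_{t\to\lambda}[f_\Phi(s;t)]=\frac{\Phi(\lambda)}{\lambda}e^{-s\Phi(\lambda)}$, and the same identity the paper later quotes as \cite[Proposition 4.2]{ascione2020alpha}), the exact cancellation of $\frac{\Phi^{-1}(c_2+iz)}{c_2+iz}$ against $\frac{\Phi(\mu)}{\mu}$ at $\mu=\Phi^{-1}(c_2+iz)$, the substitution $\zeta=w-z$ factorizing the exponential, the identification of the two vertical-line integrals as Bromwich inversions returning $V'_{2,H}(t)$ and $v(t,x)$, and finally the outer $t$-integral, which is by definition $L_Hv(\Phi(\lambda),x)$. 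All of this checks out.

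Two points deserve more than the gesture you give them. First, the absolute convergence of the $\zeta$-integral is not generic ``good behaviour'': from Lemma \ref{lem:Varlem}, $V'_{2,H}(t)\sim 2Ht^{2H-1}$ as $t\to 0^+$ and $V'_{2,H}(t)\sim 2H(2H-1)\theta\, t^{2H-2}e^{-t/\theta}$ as $t\to+\infty$, whence $\cL[V'_{2,H}](c_1-c_2+i\zeta)=O(|\zeta|^{-2H})$, and it is exactly the standing assumption $H>\frac{1}{2}$ (so $2H>1$) that makes this integrable along the line; the condition $c_1-c_2>-1/\theta$ places that line strictly inside the half-plane of absolute convergence, which simultaneously legitimizes the pointwise inversion and the Fubini step at fixed $R$ (the $w$-section has $L^1$ norm uniform in $z\in[-R,R]$). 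Note that this estimate degenerates at $H=1/2$, where $\cL[V'_{2,1/2}](\mu)=(\mu+2/\theta)^{-1}$ decays only like $|\zeta|^{-1}$, so your argument as written covers the proposition for $H>1/2$ only. Second, under hypothesis $(b)$ the symmetric-limit inversion along $\Re=c_2$ recovers $v(t,x)$ only in the $L^2$/a.e.\ sense in $t$, so identifying the inner limit in \eqref{defhatL} pointwise and then integrating in $t$ needs a word: either an a.e.\ identification plus domination, or pushing the $t$-integral through the $L^2$ convergence by Cauchy--Schwarz, using that $e^{-\Re\Phi(\lambda)t}V'_{2,H}(t)\in L^1\cap L^2(0,+\infty)$. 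With these details written out, your sketch is a complete proof.
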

With this result in mind, we have that for any subordinated function $v_\Phi=\cS_\Phi v$ satisfying the hypotheses of Proposition \ref{prop:simp} the generalized Fokker-Planck operator can be rewritten as
\begin{equation}\label{fpop}
\cF_{H,\Phi}v_\Phi(t,x)=\cL^{-1}_{\lambda \to t}\left[\frac{\Phi(\lambda)}{\lambda}\pdsup{}{x}{2}L_{H}v(\Phi(\lambda),x)\right].
\end{equation}
Now let us recall the following definitions of solutions. The general case is given in \cite{ascione2020alpha}; here we will consider only the case of subordinated solutions, thus the definitions will take in consideration the simplified formula \eqref{fpop} in place of the actual definition of $\cF_{H,\Phi}$.
\begin{defn}\label{def2}
	Let us consider $v_\Phi:I \times [0,+\infty) \to \R$ such that there exists $v \in L^\infty(\R^+;C^0(I))$ such that $v_\Phi=\cS_\Phi v$. We say that $v_\Phi$ is a \textbf{classical solution} of equation \eqref{genFP}
	if
	\begin{itemize}
		\item $v_\Phi$ belongs to the domain of $\cF_{\Phi,H}$;
		\item $\partial^\Phi_t v_\Phi(x,\cdot)$ is well-defined for any $x \in I$;
		\item equation \eqref{genFP} holds pointwise for almost any $t \in [0,T]$ and any $x \in I$.
	\end{itemize}
	Moreover, we say that a classical solution $v_\Phi$ is a \textbf{strong solution} if, for any $x \in I$, $v_\Phi(\cdot,x)\in C^1(0,+\infty)$ and there exists $\varepsilon>0$ such that $v_\Phi(\cdot,x)\in W^{1,1}(0,\varepsilon)$.\\
	We say that $v_\Phi$ is a \textbf{mild solution} of equation \eqref{genFP} if
	\begin{itemize}
		\item $v_\Phi(\cdot,x)$ is Laplace transformable for any $x \in I$ with Laplace transform $\bar{v}_\Phi$;
		\item For any $\lambda \in \bH=\{\lambda \in \C: \ \Re(\lambda)>0\}$ it holds $\bar{v}_\Phi(\lambda,\cdot) \in C(I)$, where $C(I)$ is the space of continuous functions in $I$;
		\item It holds
		\begin{equation}\label{ltgenFP}
		\Phi(\lambda)\bar{v}_\Phi(\lambda,x)-\frac{\Phi(\lambda)}{\lambda}v_\Phi(0,x)=\frac{\Phi(\lambda)}{2\lambda}L_H v(\Phi(\lambda),x), \ x \in I, \ \lambda \in \bH.
		\end{equation}
	\end{itemize}
\end{defn}
\begin{rmk}
	Let us observe that equation \eqref{ltgenFP} can be obtained from equation \eqref{genFP} by applying the Laplace transform on its both sides  and  recalling that
	\begin{equation*}
	\cL_{t \to \lambda}[\partial_t^\Phi v_\Phi(t,x)]=\Phi(\lambda)\bar{v}_\Phi(\lambda,x)-\frac{\Phi(\lambda)}{\lambda}v(0,x).
	\end{equation*}
	Moreover, let us recall that the same definitions can be applied to the case $H=1/2$. However, in this case we need to pay attention to the fact that the operator $\cF_{\Phi,H}$ is defined only taking in consideration the fact that the parent process is Gaussian, hence $\cF_{\Phi,1/2}$ does not coincide with twice the generator of an Ornstein-Uhlenbeck process unless it is applied to $p_{\Phi,H}(x,t)$, as one can observe from \cite{gajda2015time,ascione2020non}.
\end{rmk}
From now on let us denote by $v_\Phi(t,x;H)$ any solution (mild or classical, depending on the case) of equation \eqref{genFP}. We want to show that if we consider any sequence $H_n \to 1/2^+$ such that $v_\Phi(t,x;H_n)$ converges in some sense to a function $v_\Phi(t,x;1/2)$, then the latter is still a mild or classical solution of equation \eqref{genFP}. To do this, we will need some preliminary technical results. Let us first recall that
\begin{equation}\label{eq:pass8}
V'_{2,\frac{1}{2}}(t)=e^{-\frac{2t}{\theta}},
\end{equation}
while, according to formula (5.3) from \cite{ascione2019time}, as $H> \frac{1}{2}$,
\begin{equation}\label{Vprime}
V'_{2,H}(t)=2H(2H-1)e^{-\frac{2t}{\theta}}\int_0^{t}e^{\frac{z}{\theta}}z^{2H-2}dz.
\end{equation}
By using these formulas, we can show the following technical lemma.
\begin{lem}\label{convVprime}
	For any $\varepsilon>0$ there exists a constant $H_\varepsilon \in \left(\frac{1}{2},1\right)$ and a function $C_{\varepsilon}:\left(\frac{1}{2},H_\varepsilon\right]\to \R_+$ such that $\lim_{H \to \frac{1}{2}^+}C_\varepsilon(H)=0$ and
	\begin{equation*}
	|V'_{2,H}(t)-V'_{2,\frac{1}{2}}(t)|\le C_\varepsilon(H)e^{-\frac{t}{\theta}} \quad \forall t \in [\varepsilon,+\infty), \  \ \forall H \in \left(\frac{1}{2},H_\varepsilon\right].
	\end{equation*}
	Moreover, for any $\varepsilon>0$, it holds that
	\begin{equation*}
	\limsup_{H \to \frac{1}{2}^+}\Norm{V'_{2,H}-V'_{2,\frac{1}{2}}}{L^\infty(0,\varepsilon)}\le 1.
	\end{equation*}
\end{lem}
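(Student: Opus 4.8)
The plan is to reduce both assertions to the study of the rescaled quantity $G_H(t):=e^{\frac{t}{\theta}}V'_{2,H}(t)$, for which I would first derive a regularized integral representation. Integrating \eqref{Vprime} by parts (moving the derivative off $z^{2H-2}$ onto $e^{z/\theta}$, the boundary term at $z=0$ vanishing since $2H-1>0$), I obtain for every $H\in\left(\frac12,1\right)$ the identity
\begin{equation*}
V'_{2,H}(t)=2He^{-\frac{t}{\theta}}t^{2H-1}-\frac{2H}{\theta}e^{-\frac{2t}{\theta}}\int_0^te^{\frac{z}{\theta}}z^{2H-1}dz,
\end{equation*}
equivalently $G_H(t)=2Ht^{2H-1}-\frac{2H}{\theta}\int_0^te^{-\frac{t-z}{\theta}}z^{2H-1}dz$. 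The advantage of this form over \eqref{Vprime} is that the integrand $z^{2H-1}$ is now bounded on $[0,t]$, so the factor $2H-1\to0$ no longer has to compensate a divergent integral; moreover, evaluating at $H=\frac12$ reproduces $G_{\frac12}(t)=1-(1-e^{-t/\theta})=e^{-t/\theta}$, in accordance with \eqref{eq:pass8}.

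For the first assertion I would note that the inequality to be proved is exactly $\sup_{t\ge\varepsilon}|G_H(t)-G_{\frac12}(t)|\le C_\varepsilon(H)$, so I may simply set $C_\varepsilon(H):=\sup_{t\ge\varepsilon}|G_H(t)-G_{\frac12}(t)|$ and must show it tends to $0$. Splitting $[\varepsilon,+\infty)=[\varepsilon,T]\cup[T,+\infty)$, on the compact rectangle $\left[\frac12,\frac34\right]\times[\varepsilon,T]$ the map $(H,t)\mapsto G_H(t)$ is jointly continuous — the only delicate point being the jump of $z^{2H-1}$ at $(H,z)=\left(\frac12,0\right)$, which is harmless inside the integral by dominated convergence since $z^{2H-1}\le\max\{1,T^{1/2}\}$ — hence uniformly continuous, so $\sup_{t\in[\varepsilon,T]}|G_H(t)-G_{\frac12}(t)|\to0$ as $H\to\frac12^+$. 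On the tail I would use \eqref{Vprime} directly: writing $\int_0^t=\int_0^{t/2}+\int_{t/2}^t$ and bounding $e^{z/\theta}$ by $e^{t/(2\theta)}$ on the near piece and $z^{2H-2}$ by $(t/2)^{2H-2}$ on the far piece yields $G_H(t)\le 2He^{-t/(2\theta)}(t/2)^{2H-1}+2H(2H-1)\theta(t/2)^{2H-2}$, a bound vanishing as $t\to+\infty$ uniformly in $H\in\left[\frac12,\frac34\right]$; together with $G_{\frac12}(t)=e^{-t/\theta}$ this makes $\sup_{t\ge T}|G_H-G_{\frac12}|$ as small as desired by choosing $T$ large, uniformly in $H$. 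Combining the two regimes gives $\limsup_{H\to\frac12^+}C_\varepsilon(H)=0$, which is the claim.

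The second assertion is softer. From the regularized identity, dropping the non-negative integral, $0\le V'_{2,H}(t)\le 2He^{-\frac{t}{\theta}}t^{2H-1}\le 2H\varepsilon^{2H-1}$ for $t\in(0,\varepsilon)$, while $0<V'_{2,\frac12}(t)=e^{-\frac{2t}{\theta}}\le1$. Since both functions are non-negative, $|V'_{2,H}(t)-V'_{2,\frac12}(t)|\le\max\{V'_{2,H}(t),V'_{2,\frac12}(t)\}$, so $\Norm{V'_{2,H}-V'_{2,\frac12}}{L^\infty(0,\varepsilon)}\le\max\{2H\varepsilon^{2H-1},1\}$, and letting $H\to\frac12^+$ the right-hand side tends to $\max\{1,1\}=1$.

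I expect the main obstacle to be the uniform tail control: individually the two terms of $G_H$ blow up like $t^{2H-1}$ as $t\to+\infty$ and only cancel, so one cannot estimate $G_H$ directly from the regularized formula there and must return to \eqref{Vprime}, where the vanishing prefactor $2H(2H-1)$ and the exponential weight have to be balanced carefully and uniformly in $H$. The companion subtlety is the genuine discontinuity of $(H,z)\mapsto z^{2H-1}$ at the corner $\left(\frac12,0\right)$, which forces one to argue joint continuity of $G$ at the integral level rather than pointwise on the integrand.
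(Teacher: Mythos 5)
Your proposal is correct, but it takes a genuinely different route from the paper's. The paper works directly with \eqref{Vprime}: it expands $\int_0^t e^{z/\theta}z^{2H-2}dz$ in a power series (via \cite[formula $3.383.1$]{gradshteyn2014table}), isolates the leading term so that $|V'_{2,H}(t)-V'_{2,\frac12}(t)|\le I_1(t,H)+I_2(t,H)$ with $I_1=|2Ht^{2H-1}-1|e^{-2t/\theta}$, bounds the series remainder $I_2\le C_1(H)e^{-t/\theta}$ by maximizing $t^{2H-1+k}e^{-t/\theta}$ term by term together with Stirling's inequality (this is where the restriction $H<\frac34$ enters, to make the resulting series summable), and then disposes of $I_1$ by explicit calculus on $f_H(t)=(2Ht^{2H-1}-1)e^{-t/\theta}$ over the three ranges $[0,\varepsilon]$, $[\varepsilon,1]$, $(1,+\infty)$, locating its maximum point $t_{max}(H)$. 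You instead integrate \eqref{Vprime} by parts once to get the regularized identity $G_H(t):=e^{t/\theta}V'_{2,H}(t)=2Ht^{2H-1}-\frac{2H}{\theta}\int_0^t e^{-(t-z)/\theta}z^{2H-1}dz$ (consistent with \eqref{eq:pass8} at $H=\frac12$), which removes the singular interplay between the prefactor $2H-1$ and the divergent integral, and then argue softly: joint continuity on $\left[\frac12,\frac34\right]\times[\varepsilon,T]$ by dominated convergence (your handling of the discontinuity of $(H,z)\mapsto z^{2H-1}$ at $\left(\frac12,0\right)$ is the right way to do this), uniform continuity on that compact set, and for the tail the split $\int_0^{t/2}+\int_{t/2}^t$ applied back to \eqref{Vprime}, which is indeed forced on you since, as you note, the two terms of $G_H$ individually grow like $t^{2H-1}$ and only cancel at infinity; your tail bound $2He^{-t/(2\theta)}(t/2)^{2H-1}+2H(2H-1)\theta(t/2)^{2H-2}$ does vanish as $t\to+\infty$ uniformly in $H\in\left(\frac12,\frac34\right]$, and defining $C_\varepsilon(H)=\sup_{t\ge\varepsilon}|G_H(t)-G_{\frac12}(t)|$ is legitimate (finite by continuity plus the tail bound, and exactly equivalent to the displayed inequality). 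Your treatment of the second assertion is likewise correct and simpler than the paper's: dropping the non-negative integral gives $0\le V'_{2,H}\le 2H\varepsilon^{2H-1}$ on $(0,\varepsilon)$, and $|a-b|\le\max\{a,b\}$ for $a,b\ge0$ yields the limsup bound $1$, where the paper instead extracts $\max\{1,|2H\varepsilon^{2H-1}-1|\}+C_1(H)$ from its series decomposition. What each approach buys: the paper's computation produces explicit constants $C_1(H),C_2(H),C_3(H)$ and hence an in-principle quantitative $C_\varepsilon(H)$, while yours is shorter and avoids both the Stirling estimate and the maximization of $f_H$, at the price of a non-constructive $C_\varepsilon(H)$; interestingly, both arguments restrict to $H\le\frac34$, the paper for summability of the series, you for the uniform-in-$H$ decay of the tail bound.
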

\begin{proof}
	First of all, we need to find a power series representation of the integral involved in equation \eqref{Vprime}.
	Applying \cite[formula $3.383.1$]{gradshteyn2014table}, we get the equality
	\begin{equation*}
		\int_0^{t}e^{\frac{z}{\theta}}z^{2H-2}dz=\sum_{k=0}^{+\infty}\frac{1}{2H-1+k}\frac{t^k}{\theta^kk!}
		\end{equation*}
	and then, substituting it in equation \eqref{Vprime}, we come to the conclusion
		\begin{align*}
		V'_{2,H}(t)&=2H t^{2H-1}e^{-\frac{2t}{\theta}}+2H(2H-1) t^{2H-1}e^{-\frac{2t}{\theta}}\sum_{k=1}^{+\infty}\frac{1}{2H-1+k}\frac{t^k}{\theta^kk!}.
		\end{align*}
	Therefore, recalling also \eqref{eq:pass8}, the difference of the derivatives can be bounded as
	\begin{align}\label{passVprime}
	\begin{split}
	|V'_{2,H}(t)-V'_{2,\frac{1}{2}}(t)|&\le e^{-\frac{2}{\theta}t}|2Ht^{2H-1}-1|\\
	 &\qquad+2H(2H-1)e^{-\frac{t}{\theta}}\sum_{k=1}^{+\infty}\frac{1}{2H-1+k}\frac{t^{2H-1+k}e^{-\frac{t}{\theta}}}{\theta^kk!}\\
	&=:I_1(t,H)+I_2(t,H)
	\end{split}
	\end{align}
	Let us first work with the term $I_2(t,H)$, that contains the series. Define the function $g(t)=t^{2H-1+k}e^{-\frac{t}{\theta}}>0$ and   observe that
	\begin{equation*}
	g'(t)=\frac{e^{-\frac{t}{\theta}}t^{2H-2+k}}{\theta}(\theta(2H-1+k)-t).
	\end{equation*}
	Thus, $g(t)$ admits a maximum point in $t=\theta(2H-1+k)$ and it means that
	\begin{equation*}
	g(t)\le \theta^{2H-1+k}(2H-1+k)^{2H-1+k}e^{-(2H-1+k)},
	\end{equation*}
	\textcolor{black}{that} immediately implies that
	\begin{equation}\label{eq:pass9}
	I_2(t,H)\le 2H(2H-1)\theta^{2H-1}e^{-\frac{t}{\theta}}\sum_{k=1}^{+\infty}(2H-1+k)^{2H-2+k}\frac{e^{-(2H-1+k)}}{k!}.
	\end{equation}
	Let us also recall that
	\begin{equation*}
	k!\ge \sqrt{2\pi}e^{-k}k^{k+\frac{1}{2}},
	\end{equation*}
	to conclude from equation \eqref{eq:pass9} that
	\begin{equation}\label{eq:pass10}
	I_2(t,H)\le \frac{2H(2H-1)\theta^{2H-1}}{e^{2H-1}}e^{-\frac{t}{\theta}}\sum_{k=1}^{+\infty}\left(\frac{2H-1+k}{k}\right)^{k}\frac{(2H-1+k)^{2H-2}}{(2\pi k)^\frac{1}{2}}.
	\end{equation}
	Furthermore,  observing that
	\begin{equation*}
	\left(\frac{2H-1+k}{k}\right)^{k}=\left(\left(1+\frac{2H-1}{k}\right)^{\frac{k}{2H-1}}\right)^{2H-1},
	\end{equation*}
	we have that $\lim_{k \to +\infty}\left(\frac{2H-1+k}{k}\right)^{k}=e^{2H-1}$. Thus, to make sure that the series in equation \eqref{eq:pass10} converges, we only need to show that
	\begin{equation*}
	2-2H+\frac{1}{2}>1,
	\end{equation*}
	that is equivalent to the upper bound  $H<\frac{3}{4}$. So, we understand that choosing $H_\varepsilon<\frac{3}{4}$ and
	\begin{equation*}
	 C_1(H)=\frac{2H(2H-1)\theta^{2H-1}}{e^{2H-1}}\sum_{k=1}^{+\infty}\left(\frac{2H-1+k}{k}\right)^{k}\frac{(2H-1+k)^{2H-2}}{(2\pi k)^\frac{1}{2}},
	\end{equation*}
	we obtain that
	\begin{equation}\label{eq:pass12}
	I_2(t,H)\le C_1(H)e^{-\frac{t}{\theta}}
	\end{equation}
	for any $H \in \left(\frac{1}{2},H_\varepsilon\right]$, and, moreover,  $C_1(H)\to 0$ as $H \to \frac{1}{2}^+$ by dominated convergence.\\
	Now let us consider $I_1(t,H)$ as defined in inequality \eqref{passVprime}, i.e.
	\begin{equation}\label{eq:pass10b}
	I_1(t,H)=|2Ht^{2H-1}-1|e^{-\frac{2}{\theta}t}.
	\end{equation}
	It is natural to distinguish three cases. If $t \in [0,\varepsilon]$, we can state that
	\begin{equation*}
	I_1(t,H)\le |2Ht^{2H-1}-1|
	\end{equation*}
	where $2Ht^{2H-1}-1$ is an increasing function. Hence,
	\begin{equation*}
	I_1(t,H)\le \max\{1,|2H\varepsilon^{2H-1}-1|\}.
	\end{equation*}
	In such a case, calculating the supremum in    $t \in [0,\varepsilon]$ in inequality \eqref{passVprime}, we obtain that
	\begin{equation*}
	\Norm{V'_{2,H}-V'_{2,\frac{1}{2}}}{L^\infty(0,\varepsilon)}\le \max\{1,|2H\varepsilon^{2H-1}-1|\}+C_1(H),
	\end{equation*}
	and   taking the limit superior as $H \to \frac{1}{2}^+$, we ultimately come to the upper bound
	\begin{equation*}
	\limsup_{H \to \frac{1}{2}^+}\Norm{V'_{2,H}-V'_{2,\frac{1}{2}}}{L^\infty(0,\varepsilon)}\le 1,
	\end{equation*}
	since $|2H\varepsilon^{2H-1}-1|\to 0$.\\
	Now let us consider $t \in [\varepsilon,1]$. For this values of argument we have the upper bound
	\begin{equation*}
	I_1(t,H)\le e^{-\frac{2t}{\theta}}\max\left\{|2H\varepsilon^{2H-1}-1|,2H-1\right\}:=C_2(H)e^{- \frac{2t}{\theta}}
	\end{equation*}
	where $C_2(H)\to 0$ as $H \to \frac{1}{2}^+$. Finally, let us consider $t \in (1,+\infty)$. Let us observe that $2Ht^{2H-1}-1>0$ if and only if $t^{2H-1}>\frac{1}{2H}$, where $2H>1$. In particular, this is achieved if $t>1$. Therefore, by equality \eqref{eq:pass10b}, we get that
	\begin{equation*}
	I_1(t,H)=e^{-\frac{t}{\theta}}f_H(t),
	\end{equation*}
	where
	\begin{equation}\label{eq:pass11}
	f_H(t)=(2Ht^{2H-1}-1)e^{-\frac{t}{\theta}}.
	\end{equation}
	Setting \textcolor{black}{$f_H(+\infty)=0$}, we can state that  $f_H$ is a continuous   and non-negative function on the interval $[1,+\infty]$. So, we can search for a maximum within this set. Differentiating $f_H$,  one can see that
	\begin{equation*}
	f'_H(t)=\frac{e^{-\frac{t}{\theta}}}{\theta}(2H\theta(2H-1)t^{2H-2}-2Ht^{2H-1}+1).
	\end{equation*}
	Denote by $t_{max}(H)$ the maximum point of $f_H$. Then, since $f_H(+\infty)=0$, it is possible to conclude that either
	$t_{max}(H)=1$, or
	\begin{equation*}
	2H(\theta(2H-1)t_{max}(H)^{-1}-1)t_{max}(H)^{2H-1}+1=0.
	\end{equation*}
	The latter equality is equivalent to the following one:
	\textcolor{black}{\begin{equation*}
		t_{max}(H)^{2H-1}=\frac{t_{max}(H)}{2H(t_{max}(H)-\theta(2H-1))}.
		\end{equation*}}
	If $t_{max}(H)=1$, then, evidently, by equation \eqref{eq:pass11}
	\begin{equation*}
	f_H(t_{max}(H))=(2H-1)e^{-\frac{1}{\theta}},
	\end{equation*}
	and this value  goes to $0$ as $H \to \frac{1}{2}$.
	If $t_{max}(H)\not = 1$, then, still by equation \eqref{eq:pass11},
	\begin{equation*}
	f_H(t_{max}(H))=\left(\frac{1}{1-\frac{\theta(2H-1)}{t_{max}(H)}}-1\right)e^{-\frac{t_{max}(H)}{\theta}}.
	\end{equation*}
	\textcolor{black}{Since $H>\frac{1}{2}$ and $t_{max}(H)\ge1$, we have
		\begin{equation*}
		\left|\frac{\theta(2H-1)}{t_{max}(H)}\right|\le \theta(2H-1)
		\end{equation*}
		and then we have that $\lim_{H \to \frac{1}{2}^+}\frac{\theta(2H-1)}{t_{max}(H)}=0.$ From this observation it is easy to conclude that $\lim_{H \to \frac{1}{2}^+}f_H(t_{max}(H))=0$.}
	Thus we can define $C_3(H)=f_H(t_{max}(H))$ and $C_4(H)=\max\{C_2(H),C_3(H)\}$. In such case, for any $t \in [\varepsilon,+\infty)$ and $H \in \left(\frac{1}{2},H_\varepsilon\right]$ we have, recalling equations \eqref{passVprime} and \eqref{eq:pass12}, that
	\begin{equation*}
	|V'_{2,H}(t)-V'_{2,\frac{1}{2}}(t)|\le (C_4(H)+C_1(H))e^{-\frac{t}{\theta}},
	\end{equation*}
	concluding the proof by setting $C_\varepsilon(H)=C_4(H)+C_1(H)$.
\end{proof}
\begin{rmk}
	Let us observe that the previous lemma implies the uniform convergence of $V'_{2,H}(t)$ towards $V'_{2,1/2}(t)$ in any interval of the form $(\varepsilon,+\infty)$ for $\varepsilon>0$.
\end{rmk}
Now we are ready to show the main result of this Section.
\begin{thm}\label{thm:mildconv}
	Let us consider a sequence $H_n \to \frac{1}{2}^+$ and a sequence of mild solutions $v_\Phi(t,x;H_n)$ of equation \eqref{genFP} for each $n \in \N$ such that $v_\Phi(t,x;H_n)=\cS_\Phi v(t,x;H_n)$ for $v(\cdot,\cdot;H_n) \in L^\infty((0,+\infty);C^2(I))$. Let us suppose there exists a function $v(\cdot,\cdot;1/2)\in L^\infty((0,+\infty);C^2(I))$ such that $v(\cdot,\cdot;H_n) \to v(\cdot,\cdot;1/2)$ strongly, i.e.
	\begin{equation*}
	\lim_{n \to +\infty}\sup_{t \in (0,+\infty)}\Norm{v(t,\cdot;H_n)-v(t,\cdot;1/2)}{C^2(I)}=0.
	\end{equation*}
	where with $\sup$ we intend the essential supremum and
	\begin{equation*}
	\Norm{f}{C^2(I)}=\sum_{i=0}^{2}\Norm{\dersup{}{x}{i}f(x)}{L^\infty(I)}.
	\end{equation*}
	Moreover, let us suppose that $v(\cdot,x;1/2)$ and $v(\cdot,x;H_n)$ satisfy the hypotheses of Proposition \ref{prop:simp} for any $n \in \N$ and $x \in I$. Then $v_\Phi(\cdot,\cdot;1/2)=S_\Phi v(\cdot,\cdot;1/2)$ is a mild solution of equation \eqref{genFP} for $H=1/2$.\\
Let us suppose additionally $v(\cdot,\cdot;H_n) \in \cD(\cF_{H_n,\Phi},I)$, where $\cD(\cF_{H_n,\Phi},I)$ is the domain of the operator $\cF_{H_n,\Phi}$, $\cF_{H_n,\Phi}v(t,\cdot;H_n) \in C^0(I)$ and, for fixed $x \in I$, $\cF_{H_n}v(\cdot,x;H_n)\in L^\infty(0,+\infty)$, where $\cF_{H_n}:=V'_{2,H_n}\pdsup{}{x}{2}$. Then $v_\Phi(\cdot,\cdot;H_n)$ are classical solutions of \eqref{genFP} and $v_\Phi(\cdot,\cdot;1/2)$ is a classical solution of \eqref{genFP} for $H=1/2$.
\end{thm}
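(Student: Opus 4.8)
The plan is to establish the mild-solution claim first and then bootstrap to the classical case, in both parts passing to the limit $n\to+\infty$ directly in the transformed equation and reducing everything to the convergence of the weight $V'_{2,H_n}$ supplied by Lemma~\ref{convVprime}. For the mild part I would write the defining relation \eqref{ltgenFP} for each $H_n$,
\begin{equation*}
\Phi(\lambda)\bar{v}_\Phi(\lambda,x;H_n)-\frac{\Phi(\lambda)}{\lambda}v_\Phi(0,x;H_n)=\frac{\Phi(\lambda)}{2\lambda}L_{H_n}v(\Phi(\lambda),x;H_n),
\end{equation*}
and argue that each term converges to its $H=1/2$ counterpart. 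Using the subordination identity $\cL_{t\to\lambda}[\cS_\Phi w]=\frac{\Phi(\lambda)}{\lambda}\cL_{s\to\Phi(\lambda)}[w]$, one has $\bar{v}_\Phi(\lambda,x;H_n)=\frac{\Phi(\lambda)}{\lambda}\bar v(\Phi(\lambda),x;H_n)$; since $v(\cdot,\cdot;H_n)\to v(\cdot,\cdot;1/2)$ uniformly and the family is uniformly bounded, the dominated convergence theorem gives convergence of this Laplace transform for every $\lambda\in\bH$, while $v_\Phi(0,x;H_n)=v(0,x;H_n)\to v(0,x;1/2)$, and the continuity-in-$x$ and Laplace-transformability requirements of Definition~\ref{def2} transfer to the limit from the uniform $C^2(I)$-bound. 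Thus the whole matter reduces to the behaviour of the weighted term $L_{H_n}v(\Phi(\lambda),x;H_n)$.

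The heart of the argument is therefore the claim that $L_{H_n}v(\Phi(\lambda),x;H_n)\to L_{1/2}v(\Phi(\lambda),x;1/2)$. Writing the difference under the integral sign and splitting
\begin{equation*}
V'_{2,H_n}v(\cdot;H_n)-V'_{2,1/2}v(\cdot;1/2)=V'_{2,H_n}\big(v(\cdot;H_n)-v(\cdot;1/2)\big)+\big(V'_{2,H_n}-V'_{2,1/2}\big)v(\cdot;1/2),
\end{equation*}
the first piece is controlled by the uniform convergence of $v$ together with a uniform (in $n$) integrability bound $\int_0^{+\infty}e^{-\Re(\Phi(\lambda))t}V'_{2,H_n}(t)\,dt\le M$, obtained by dominating $V'_{2,H_n}\le V'_{2,1/2}+|V'_{2,H_n}-V'_{2,1/2}|$ through Lemma~\ref{convVprime}. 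The second piece is the main obstacle, because $V'_{2,H_n}$ does \emph{not} converge uniformly near $t=0$: Lemma~\ref{convVprime} only gives $\limsup_{H\to1/2^+}\Norm{V'_{2,H}-V'_{2,1/2}}{L^\infty(0,\varepsilon)}\le1$. I would handle it by the $\varepsilon$-splitting the lemma is designed for: on $[\varepsilon,+\infty)$ the exponential bound $|V'_{2,H_n}-V'_{2,1/2}|\le C_\varepsilon(H_n)e^{-t/\theta}$ with $C_\varepsilon(H_n)\to0$ forces that contribution to vanish; on $(0,\varepsilon)$ one bounds the difference by its $L^\infty(0,\varepsilon)$-norm and uses $\int_0^\varepsilon e^{-\Re(\Phi(\lambda))t}\,dt=O(\varepsilon)$, so that the $\limsup$ over $n$ of this contribution is $O(\varepsilon)$. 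Letting $\varepsilon\to0^+$ kills it, and \eqref{ltgenFP} holds at $H=1/2$, proving that $v_\Phi(\cdot,\cdot;1/2)$ is a mild solution. If a $\pdsup{}{x}{2}$ sits inside the weighted transform nothing changes, since $\partial_x^2 v(\cdot,\cdot;H_n)\to\partial_x^2 v(\cdot,\cdot;1/2)$ uniformly too, by strong $C^2(I)$-convergence.

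For the classical case I would first record the simplification $\cF_{H,\Phi}v_\Phi=\cS_\Phi[\cF_H v]$, obtained from \eqref{fpop} by exchanging $\pdsup{}{x}{2}$ with the Laplace transform (licensed by the uniform $C^2$-bound) and the same subordination identity, where $\cF_H=V'_{2,H}\pdsup{}{x}{2}$. The stated hypotheses make each $v_\Phi(\cdot,\cdot;H_n)$ a classical solution, so \eqref{genFP} holds pointwise with right-hand side $\tfrac12\cS_\Phi[\cF_{H_n}v(\cdot,\cdot;H_n)]$. Passing to the limit in this subordination integral repeats the previous split, now integrating against $f_\Phi(s;t)\,ds$ and using $\int_0^\varepsilon f_\Phi(s;t)\,ds\le\varepsilon\Norm{f_\Phi(\cdot;t)}{L^\infty(0,1)}=O(\varepsilon)$ in place of the exponential factor; together with the uniform convergence of $\partial_x^2 v$ this yields $\cS_\Phi[\cF_{H_n}v(\cdot,\cdot;H_n)]\to\cS_\Phi[\cF_{1/2}v(\cdot,\cdot;1/2)]$ pointwise. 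Since $\cF_{1/2}v(\cdot,\cdot;1/2)=V'_{2,1/2}\,\partial_x^2 v(\cdot,\cdot;1/2)\in L^\infty$, the limit equals $\cF_{1/2,\Phi}v_\Phi(\cdot,\cdot;1/2)$ and places $v_\Phi(\cdot,\cdot;1/2)$ in $\cD(\cF_{1/2,\Phi})$ with a $C^0(I)$-valued image. Finally, combining this with the mild-solution identity from the first part and the injectivity of the Laplace transform, I would recover the pointwise equation \eqref{genFP} at $H=1/2$, so that $v_\Phi(\cdot,\cdot;1/2)$ is a classical solution.

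The decisive difficulty throughout is exactly the non-uniform convergence of $V'_{2,H_n}(t)$ as $t\to0^+$; everything else is dominated convergence once the uniform boundedness and the two regimes of Lemma~\ref{convVprime} are in place. The $\limsup\le1$ estimate near zero is harmless only because it is weighted by the vanishingly small mass that $e^{-\Re(\Phi(\lambda))t}\,dt$ (respectively $f_\Phi(s;t)\,ds$) assigns to a shrinking neighbourhood of the origin, which is precisely why Lemma~\ref{convVprime} is stated with its two separate regimes.
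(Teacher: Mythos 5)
Your mild-solution argument is correct and follows essentially the paper's own proof. The only real difference is the direction of the splitting: you write $V'_{2,H_n}v(\cdot;H_n)-V'_{2,1/2}v(\cdot;1/2)=V'_{2,H_n}\bigl(v(\cdot;H_n)-v(\cdot;1/2)\bigr)+\bigl(V'_{2,H_n}-V'_{2,1/2}\bigr)v(\cdot;1/2)$, whereas the paper adds and subtracts $L_{1/2}v(\Phi(\lambda),x;H_n)$, so the weight difference acts on $v(\cdot;H_n)$ (controlled by the uniform bound $\Norm{v(\cdot,\cdot;H_n)}{L^\infty(\R^+;C^2(I))}\le K$ supplied by strong convergence) and the fixed weight $V'_{2,1/2}$ acts on $v(\cdot;H_n)-v(\cdot;1/2)$. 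The two variants are interchangeable; your uniform-in-$n$ integrability bound on $\int_0^{+\infty}e^{-\Re(\Phi(\lambda))t}V'_{2,H_n}(t)\,dt$ plays exactly the role of the paper's constant $K$. The key mechanism is identical: the two regimes of Lemma \ref{convVprime}, with the $\limsup\le 1$ estimate on $(0,\varepsilon)$ neutralized by the factor $\frac{1-e^{-\Phi(\lambda)\varepsilon}}{\Phi(\lambda)}=O(\varepsilon)$ and the final limit $\varepsilon\to 0^+$, just as in the paper.

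The classical part, however, has a genuine gap. Definition \ref{def2} requires of a classical solution that $\partial^\Phi_t v_\Phi(\cdot,x;1/2)$ be well-defined, i.e.\ that $t\mapsto v_\Phi(t,x;1/2)$ be absolutely continuous with the convolution $\int_0^t\bar{\nu}_\Phi(t-\tau)\,\partial_\tau v_\Phi(\tau,x;1/2)\,d\tau$ making sense; nothing in your argument produces this temporal regularity of the limit. Injectivity of the Laplace transform can identify $\cL_{t\to\lambda}[\partial^\Phi_t v_\Phi(\cdot,x;1/2)]$ with $\Phi(\lambda)\bar{v}_\Phi(\lambda,x;1/2)-\frac{\Phi(\lambda)}{\lambda}v_\Phi(0,x;1/2)$ only \emph{after} you already know the left-hand side exists, so your final step cannot upgrade the mild identity to a pointwise equation. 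Note also that your unjustified assertion that the additional hypotheses make each $v_\Phi(\cdot,\cdot;H_n)$ classical is precisely the gain-of-regularity theorem of \cite{ascione2020alpha} (Theorem $5.5$ there), which the paper cites; the paper then closes the argument by applying that same theorem once more to the limit, using that $v_\Phi(\cdot,\cdot;1/2)$ is mild (part one) and that $\cF_{1/2}v(t,x;1/2)=e^{-\frac{2t}{\theta}}\pdsup{}{x}{2}v(t,x;1/2)$ lies in $L^\infty(0,+\infty)$ for fixed $x$ since $0\le e^{-\frac{2t}{\theta}}\le 1$. With that appeal, your limiting computation of $\cS_\Phi[\cF_{H_n}v(\cdot,\cdot;H_n)]$ becomes unnecessary; without it (or an equivalent regularity argument in $t$), your proof of the classical-solution claim does not go through.
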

\begin{proof}
	Let us first observe that, since $\cS_\Phi$ is a bounded linear operator it holds $v_\Phi(\cdot,\cdot;H_n) \to v_\Phi(\cdot,\cdot;1/2)$ strongly in $L^\infty(\R^+;C^{2}(I))$.\\
	Moreover, let us recall that, by definition of mild solution in Definition \ref{def2},
	\begin{equation}\label{eq:pass13a}
	 \Phi(\lambda)\overline{v}_\Phi(\lambda,x;H_n)-\frac{\Phi(\lambda)}{\lambda}v_{\Phi}(0,x;H_n)-\frac{\Phi(\lambda)}{2\lambda}\pdsup{}{x}{2}L_{H_n}v(\Phi(\lambda),x;H_n)=0
	\end{equation}
	for any $\lambda \in \bH$ and $x \in I$. Thus we have, subtracting the left-hand side of equation \eqref{eq:pass13a},
	\begin{align}\label{eq:pass13b}
	\begin{split}
	 &\Phi(\lambda)\overline{v}_\Phi(\lambda,x;1/2)-\frac{\Phi(\lambda)}{\lambda}v(0,x;1/2)-\frac{\Phi(\lambda)}{2\lambda}\pdsup{}{x}{2}L_{\frac{1}{2}}v(\Phi(\lambda),x;1/2)\\
	&\quad =\Phi(\lambda)\overline{v}_\Phi(\lambda,x;1/2)-\frac{\Phi(\lambda)}{\lambda}v(0,x;1/2)-\frac{\Phi(\lambda)}{2\lambda}\pdsup{}{x}{2}L_{\frac{1}{2}}v(\Phi(\lambda),x;1/2)\\
	&\qquad -\Phi(\lambda)\overline{v}_\Phi(\lambda,x;H_n)+\frac{\Phi(\lambda)}{\lambda}v_{\Phi}(0,x;H_n)+\frac{\Phi(\lambda)}{2\lambda}\pdsup{}{x}{2}L_{H_n}v(\Phi(\lambda),x;H_n).
	\end{split}
	\end{align}
	Now let us recall (see \cite[Proposition $4.2$]{ascione2020alpha})that
	\begin{equation*}
	\overline{v}_\Phi(\lambda,x;H)=\frac{\Phi(\lambda)}{\lambda}\overline{v}(\lambda,x;H)
	\end{equation*}
	and that $v_\Phi(0,x;H)=v(0,x;H)$ for any $x \in I$, $\lambda \in \bH$ and $H \in \left[\frac{1}{2},1\right)$. Hence, using these relations in equation \eqref{eq:pass13b}, we achieve
	\begin{align*}
	 &\Phi(\lambda)\overline{v}_\Phi(\lambda,x;1/2)-\frac{\Phi(\lambda)}{\lambda}v(x,0;1/2)-\frac{\Phi(\lambda)}{2\lambda}\pdsup{}{x}{2}L_{\frac{1}{2}}v(\Phi(\lambda),x;1/2)\\
	&\quad =\frac{\Phi^2(\lambda)}{\lambda}(\overline{v}(\Phi(\lambda),x;1/2)-\overline{v}(\Phi(\lambda),x;H_n))\\
	&\qquad +\frac{\Phi(\lambda)}{\lambda}(v(0,x;H_n)-v(0,x;1/2))\\
	&\qquad +\frac{\Phi(\lambda)}{2\lambda}\left(\pdsup{}{x}{2}L_{\frac{1}{2}}v(\Phi(\lambda),x;H_n)-\pdsup{}{x}{2}L_{\frac{1}{2}}v(\Phi(\lambda),x;1/2)\right)\\
	\end{align*}
	Using the triangular inequality, we obtain
	\begin{align*}
	 &\left|\Phi(\lambda)\overline{v}_\Phi(\lambda,x;1/2)-\frac{\Phi(\lambda)}{\lambda}v(0,x;1/2)-\frac{\Phi(\lambda)}{2\lambda}\pdsup{}{x}{2}L_{\frac{1}{2}}v(\Phi(\lambda),x;1/2)\right|\\
	 &\quad \le \frac{\Phi^2(\lambda)}{\lambda}\left|\overline{v}(\Phi(\lambda),x;H_n)-\overline{v}(\Phi(\lambda),x;1/2)\right|\\
	& \qquad + \frac{\Phi(\lambda)}{\lambda}\left|v(0,x;H_n)-v(0,x;1/2)\right|\\
	& \qquad + \frac{\Phi(\lambda)}{2\lambda}\left|\pdsup{}{x}{2}L_{H_n}v(\Phi(\lambda),x;H_n)-\pdsup{}{x}{2}L_{\frac{1}{2}}v(\Phi(\lambda),x;1/2)\right|
	\end{align*}
	Now let us control the second summand on the right hand side with the $L^\infty(\R^+;C^2(I))$ distance and let us add and subtract the term $\pdsup{}{x}{2}L_{1/2}v(\Phi(\lambda),x;H_n)$ in the third summand. With another application of the triangular inequality we get:
	\begin{align*}
	 &\left|\Phi(\lambda)\overline{v}_\Phi(\lambda,x;1/2)-\frac{\Phi(\lambda)}{\lambda}v(0,x;1/2)-\frac{\Phi(\lambda)}{2\lambda}\pdsup{}{x}{2}L_{\frac{1}{2}}v(\Phi(\lambda),x;1/2)\right|\\
	&\quad \le \frac{\Phi^2(\lambda)}{\lambda}\left|\overline{v}(\Phi(\lambda),x;H_n)-\overline{v}(\Phi(\lambda),x;1/2)\right|\\
	&\qquad +\frac{\Phi(\lambda)}{\lambda}\Norm{v(\cdot,\cdot;H_n)-v(\cdot,\cdot;1/2)}{L^\infty(\R^+;C^2(I))}\\
	& \qquad+ \frac{\Phi(\lambda)}{2\lambda}\left|\pdsup{}{x}{2}\left(L_{H_n}v(\Phi(\lambda),x;H_n)-L_{\frac{1}{2}}v(\Phi(\lambda),x;H_n)\right)\right|\\
	& \qquad+ \frac{\Phi(\lambda)}{2\lambda}\left|\pdsup{}{x}{2}\left(L_{\frac{1}{2}}v(\Phi(\lambda),x;H_n)-L_{\frac{1}{2}}v(\Phi(\lambda),x;1/2)\right)\right|.
	\end{align*}
	Now let us make another estimate. Let us observe that
	\begin{align*}
	|\overline{v}(\Phi(\lambda),x;H_n)-\overline{v}(\Phi(\lambda),x;1/2)|&\le \int_0^{+\infty}e^{-\Phi(\lambda)t}|v(t,x;H_n)-v(t,x;1/2)|dt\\
	& \le \frac{1}{\Phi(\lambda)}\Norm{v(\cdot,\cdot;H_n)-v(\cdot,\cdot;1/2)}{L^\infty(\R^+;C^2(I))},
	\end{align*}
	for any $x \in I$ and $\lambda \in \bH$. Thus we finally obtain
	\begin{align}\label{eq:pass17}
	\begin{split}
	 &\left|\Phi(\lambda)\overline{v}_\Phi(\lambda,x;1/2)-\frac{\Phi(\lambda)}{\lambda}v(0,x;1/2)-\frac{\Phi(\lambda)}{2\lambda}\pdsup{}{x}{2}L_{\frac{1}{2}}v(\Phi(\lambda),x;1/2)\right|\\
	&\quad \le  \frac{\Phi(\lambda)}{\lambda}\left(2\Norm{v(\cdot,\cdot;H_n)-v(\cdot,\cdot;1/2)}{L^\infty(\R^+;C^2(I))}+\frac{1}{2}(I_1(x,\lambda)+I_2(x,\lambda))\right),
	\end{split}
	\end{align}
	where
	\begin{align}\label{eq:pass13}
	\begin{split}
	 I_1(x,\lambda)&:=\left|\pdsup{}{x}{2}\left(L_{\frac{1}{2}}v(\Phi(\lambda),x;H_n)-L_{\frac{1}{2}}v(\Phi(\lambda),x;H_n)\right)\right|,\\
	 I_2(x,\lambda)&:=\left|\pdsup{}{x}{2}\left(L_{\frac{1}{2}}v(\Phi(\lambda),x;H_n)-L_{\frac{1}{2}}v(\Phi(\lambda),x;1/2)\right)\right|.
	\end{split}
	\end{align}
	Let us first work with $I_1(x,\lambda)$. We have, since the Laplace transform is a linear operator and $\pdsup{}{x}{2}$ is a closed operator (see \cite{arendt}), recalling the definition of $L_H$, given in Definition \ref{def1},
	\begin{align}\label{eq:pass14}
	\begin{split}
	I_1(x,\lambda)&=\pdsup{}{x}{2}\left(\cL_{t \to \lambda}\left[(V'_{2,H_n}(t)-V'_{2,1/2}(t))v(t,x;H_n)\right]\right)(\Phi(\lambda))\\
	&=\cL_{t \to \lambda}\left[(V'_{2,H_n}(t)-V'_{2,1/2}(t))\pdsup{}{x}{2}v(t,x;H_n)\right](\Phi(\lambda))\\
	&=\int_0^{+\infty}e^{-\Phi(\lambda) t}(V_{2,H_n}'(t)-V_{2,\frac{1}{2}}'(t))\pdsup{}{x}{2}v(t,x;H_n)dt.
	\end{split}
	\end{align}
	Fix $\varepsilon>0$ and define $H_\varepsilon \in \left(\frac{1}{2},1\right)$ as in Lemma \ref{convVprime}. Supposing, without loss of generality, that $H_n<H_\varepsilon$ for any $n \in \N$, we have, by Lemma \ref{convVprime},
	\begin{align*}
	I_1(x,\lambda)&=\int_0^{\varepsilon}e^{-\Phi(\lambda) t}(V_{2,H_n}'(t)-V_{2,\frac{1}{2}}'(t))\pdsup{}{x}{2}v(t,x;H_n)dt\\
	&+\int_{\varepsilon}^{+\infty}e^{-\Phi(\lambda) t}(V_{2,H_n}'(t)-V_{2,\frac{1}{2}}'(t))\pdsup{}{x}{2}v(t,x;H_n)dt\\
	&\le \Norm{V'_{2,H_n}-V'_{2,\frac{1}{2}}}{L^\infty(0,\varepsilon)}\Norm{v_n}{L^\infty(\R^+;C^2(I))}\frac{1-e^{-\Phi(\lambda) \varepsilon}}{\Phi(\lambda)}\\
	&\quad +\frac{C_\varepsilon(H_n)}{\Phi(\lambda)+\frac{1}{\theta}}e^{-\left(\Phi(\lambda)+\frac{1}{\theta}\right)\varepsilon}\Norm{v_n}{L^\infty(\R^+;C^2(I))}\\
	&\le \left(\Norm{V'_{2,H_n}-V'_{2,\frac{1}{2}}}{L^\infty(0,\varepsilon)}\frac{1-e^{-\Phi(\lambda) \varepsilon}}{\Phi(\lambda)}+\theta C_\varepsilon(H_n)\right)\Norm{v_n}{L^\infty(\R^+;C^2(I))}.
	\end{align*}
	Since $v_n \to v$ strongly in $L^\infty(\R^+;C^2(I))$, there exists a constant $K$ such that \linebreak $\Norm{v_n}{L^\infty(\R^+;C^2(I))}\le K$ for any $n \in \N$. Thus
	\begin{equation}\label{eq:pass16}
	I_1(x,\lambda)\le K\left(\Norm{V'_{2,H_n}-V'_{2,\frac{1}{2}}}{L^\infty(0,\varepsilon)}\frac{1-e^{-\Phi(\lambda) \varepsilon}}{\Phi(\lambda)}+\theta C_\varepsilon(H_n)\right).
	\end{equation}
	Concerning $I_2(x,\lambda)$ defined in equation \eqref{eq:pass13}, we have, by the definition of $L_H$ and equation \eqref{eq:pass8}, arguing as we did for $I_1(x,\lambda)$ in equation \eqref{eq:pass14},
	\begin{equation*}
	I_2(x,\lambda)=\int_0^{+\infty}e^{-\left(\Phi(\lambda)+\frac{2}{\theta}\right)t}\pdsup{}{x}{2}(v(t,x;H_n)-v(t,x;1/2))dt.
	\end{equation*}
	and then
	\begin{align}\label{eq:pass15}
	\begin{split}
	I_2(x,\lambda)&\le \frac{1}{\Phi(\lambda)+\frac{2}{\theta}}\Norm{v(\cdot,\cdot;H_n)-v(\cdot,\cdot;H)}{L^\infty(\R^+;C^2(I))}\\
	&\le \frac{\theta}{2}\Norm{v(\cdot,\cdot;H_n)-v(\cdot,\cdot;1/2)}{L^\infty(\R^+;C^2(I))}.
	\end{split}
	\end{align}
	We conclude that, using equations \eqref{eq:pass16} and \eqref{eq:pass15} in equation \eqref{eq:pass17},
	\begin{align*}
	 &\left|\Phi(\lambda)\overline{v}_\Phi(x,\lambda;1/2)-\frac{\Phi(\lambda)}{\lambda}v(0,x;1/2)-\frac{\Phi(\lambda)}{2\lambda}\pdsup{}{x}{2}L_{\frac{1}{2}}v(\Phi(\lambda),x;1/2)\right|\\
	& \qquad \le  \frac{\Phi(\lambda)}{\lambda}\left(\left(2+\frac{\theta}{4}\right)\Norm{v(\cdot,\cdot;H_n)-v(\cdot,\cdot;1/2)}{L^\infty(\R^+;C^2(I))}\right.\\
	&\qquad\qquad\left. +\frac{K}{2}\left(\Norm{V'_{2,H_n}-V'_{2,\frac{1}{2}}}{L^\infty(0,\varepsilon)}\frac{1-e^{-\Phi(\lambda) \varepsilon}}{\Phi(\lambda)}+\theta C_\varepsilon(H_n)\right)\right),
	\end{align*}
	Thus we have, taking the supremum over $I$, for fixed $\lambda \in \cH$,
	\begin{align*}
	 &\Norm{\Phi(\lambda)\overline{v}_\Phi(\lambda,\cdot;1/2)-\frac{\Phi(\lambda)}{\lambda}v(0,\cdot;1/2)-\frac{\Phi(\lambda)}{2\lambda}\pdsup{}{x}{2}L_{\frac{1}{2}}v(\Phi(\lambda),\cdot;1/2)}{L^\infty(I)}\\
	& \qquad \le  \frac{\Phi(\lambda)}{\lambda}\left(\left(2+\frac{\theta}{4}\right)\Norm{v(\cdot,\cdot;H_n)-v(\cdot,\cdot;1/2)}{L^\infty(\R^+;C^2(I))}\right.\\
	&\qquad\qquad\left. +\frac{K}{2}\left(\Norm{V'_{2,H_n}-V'_{2,\frac{1}{2}}}{L^\infty(0,\varepsilon)}\frac{1-e^{-\Phi(\lambda) \varepsilon}}{\Phi(\lambda)}+\theta C_\varepsilon(H_n)\right)\right),
	\end{align*}
	Taking the limit superior as $n \to +\infty$, we obtain, by Proposition \ref{convVprime} and the fact that $v(\cdot,\cdot;H_n) \to v(\cdot,\cdot;1/2)$ strongly in $L^\infty(\R^+;C^2(I))$,
	\begin{align*}
	 &\Norm{\Phi(\lambda)\overline{v}_\Phi(\cdot,\lambda;1/2)-\frac{\Phi(\lambda)}{\lambda}v(\cdot,0;1/2)-\frac{\Phi(\lambda)}{2\lambda}\pdsup{}{x}{2}L_{\frac{1}{2}}v(\cdot,\Phi(\lambda);1/2)}{L^\infty(I)}\\
	& \qquad \le  \frac{K(1-e^{-\Phi(\lambda) \varepsilon})}{2\lambda}.
	\end{align*}
	Now we can send $\varepsilon \to 0$ to obtain that, pointwise, for $x \in I$ and $\Re(\lambda)>0$, it holds
	\begin{equation*}
	 \Phi(\lambda)\overline{v}_\Phi(\lambda,x;1/2)-\frac{\Phi(\lambda)}{\lambda}v(0,x;1/2)-\frac{\Phi(\lambda)}{2\lambda}\pdsup{}{x}{2}L_{\frac{1}{2}}v(\Phi(\lambda),x;1/2)=0
	\end{equation*}

Now, under the additional hypotheses, we know that $v_{\Phi}(\cdot,\cdot;H_n)$ are classical solutions by \cite[Theorem $5.5$]{ascione2020alpha}, which is a gain-of-regularity theorem. We know also that $v \in L^\infty(\R^+;C^2(I))$. Now let us observe that
	\begin{equation*}
	\cF_{1/2}v(t,x;1/2)=e^{-\frac{2t}{\theta}}\pdsup{}{x}{2}v(t,x;1/2)
	\end{equation*}
	is well defined as $v \in L^\infty(\R^+;C^2(I))$ and belongs to $L^\infty(\R^+)$ for fixed $x \in I$, as $0 \le e^{-\frac{2t}{\theta}}\le 1$. Hence, since we have shown that $v$ is a mild solution, by the same gain-of-regularity theorem as before, we have that $v$ is a classical solution, concluding the proof.	
\end{proof}
\begin{rmk}
	Let us observe that if we fix the initial data and the boundary data, by the weak maximum principle proved in \cite{ascione2020alpha}, the strong solutions are unique, thus, if we suppose that $v(\cdot,\cdot;H_n)$ are strong solutions, the convergence we obtain in the previous theorem is towards the unique strong solution of equation \eqref{genFP}.\\
	Let us also remark that Theorem \ref{thm:conv} provides a useful example for Theorem \ref{thm:mildconv}, as we have that, if $H_n \to 1/2^+$, then $p_{H_n, \Phi} \to p_{1/2,\Phi}$, where $p_{H_n, \Phi}$ are classical solutions of equation \eqref{genFP} and $p_{1/2,\Phi}$ is a classical solution of \eqref{genFP} with $H=1/2$. Another interesting case is given by $\Phi(\lambda)=\lambda^\alpha$. Indeed, in such case, it has been shown in \cite{ascione2020alpha} that $p_{H_n, \Phi}$ are strong solutions of equation \eqref{genFP} for $x \in \R\setminus \{0\}$ and $t>0$, and so it is $p_{1/2,\Phi}$. Thus, for fixed boundary data $$\lim_{|x| \to +\infty}p_{H,\Phi}(t,x)=0, \ t>0$$ and $$p_{H,\Phi}(t,0)=\frac{1}{\sqrt{2\pi}}\int_0^{+\infty}(V_{2,H}(s))^{-\frac{1}{2}}f_\Phi(s;t)ds, \ t>0$$
	and fixed initial datum $p_{H,\Phi}(0,x)=0$ for $x \in \R\setminus \{0\}$, we have that the unique strong solutions of equation \eqref{genFP} converge towards the unique strong solution of equation \eqref{genFP} for $H=1/2$.
\end{rmk}

\section*{Acknowledgements}
This research is partially supported by MIUR - PRIN 2017, project Stochastic Models for Complex Systems, no. 2017JFFHSH, by Gruppo Nazionale per il Calcolo Scientifico (GNCS-INdAM), by Gruppo Nazionale per l'Analisi Matematica, la Probabilit\`a e le loro Applicazioni (GNAMPA-INdAM). The research of the 2nd author was partially supported by ToppForsk project nr. 274410 of the Research Council of Norway with title STORM: Stochastics for Time-Space Risk Models.

\bibliographystyle{plain}
\bibliography{biblio}      
\end{document}